\newcommand\sectionpage\newpage
\renewcommand\sectionpage{}
\newtheorem{lem}{Lemma}[section]
\newtheorem{cor}[lem]{Corollary}
\newtheorem{thm}[lem]{Theorem}
\theoremstyle{definition}
\newtheorem{conj}[lem]{Conjecture}
\numberwithin{equation}{section}
\numberwithin{table}{section}
\numberwithin{figure}{section}
\newcommand\vstrut[1]{\rule{0ex}{#1}}
\renewcommand\mod{\, \operatorname{mod}\, }
\renewcommand{\phi}{\varphi} 
\renewcommand{\epsilon}{\varepsilon}
\newcommand\eset{\varnothing}
\newcommand\bs{\ensuremath{\backslash}}
\newcommand\codim{\operatorname{codim}}
\newcommand\cA{\mathscr{A}}		
\renewcommand\cH{\mathcal{H}}	
\renewcommand\cL{\mathscr{L}}	
\newcommand\cP{\mathcal{P}}
\newcommand\cU{\mathcal{U}}	
\newcommand\cW{\mathcal{W}}
\newcommand\bbR{\mathbb{R}}
\newcommand\bbZ{\mathbb{Z}}
\newcommand\pP{\mathbb P}	
\newcommand\pQ{\mathbb Q}	
\newcommand\bz{\mathbf z}
\newcommand\bL{\mathbf{L}}
\newcommand\hatc{{\hat c}}
\newcommand\hatd{{\hat d}}
\newcommand\vol{\operatorname{vol}}
\newcommand\M{\mathbf{M}}
\newcommand\Kot{Kot\v{e}\v{s}ovec}
\newcommand\cube{[0,1]^{2q}}
\newcommand\ocube{(0,1)^{2q}}
\newcommand\Eiopmu{(I.2.1)}		
\newcommand\background{I.2}	
\newcommand\types{I.5}		
\newcommand\subspaces{I.3.2}	
\newcommand\oneortwo{II.3}		
\newcommand\R{IV.6}		
\newcommand\LWmu{I.3.1}		
\newcommand\Ttypenumber{I.5.3}	
\newcommand\Ptwopiecetypes{I.5.6}	
\newcommand\pattackdc{II.3.1}	
\newcommand\TutwoP{II.3.1}		
\newcommand\Tgammapolysquare{II.5.1}	
\newcommand\thmN{IV.9.2}		
\begin{document}

\allowdisplaybreaks

\title{A $q$-Queens Problem \\
III.  Nonattacking Partial Queens}

\author{Seth Chaiken}
\address{Computer Science Department\\ The University at Albany (SUNY)\\ Albany, NY 12222, U.S.A.}
\email{\tt sdc@cs.albany.edu}

\author{Christopher R.\ H.\ Hanusa}
\address{Department of Mathematics \\ Queens College (CUNY) \\ 65-30 Kissena Blvd. \\ Queens, NY 11367-1597, U.S.A.}
\email{\tt chanusa@qc.cuny.edu}

\author{Thomas Zaslavsky}
\address{Department of Mathematical Sciences\\ Binghamton University (SUNY)\\ Binghamton, NY 13902-6000, U.S.A.}
\email{\tt zaslav@math.binghamton.edu}

\begin{abstract}
We apply our geometrical theory for counting placements of $q$ nonattacking on an $n\times n$ chessboard, from Parts~I and II, to partial queens: that is, chess pieces with any combination of horizontal, vertical, and $45^\circ$-diagonal moves.  Parts~I and II showed that for any rider (a piece with moves of unlimited length) the answer will be a quasipolynomial function of $n$ in which the coefficients are essentially polynomials in $q$.  Those general results gave the three highest-order coefficients of the counting quasipolynomial and formulas for counting placements of two nonattacking pieces and the combinatorially distinct types of such placements. 

By contrast, the unified framework we present here for partial queens allows us to explicitly compute the four highest-order coefficients of the counting quasipolynomial, show that the five highest-order coefficients are constant (independent of $n$), and find the period of the next coefficient (which depends upon the exact set of moves).  Furthermore, for three nonattacking partial queens we are able to prove formulas for the total number of nonattacking placements and for the number of their combinatorially distinct types.  

The method of proof, as in the previous parts, is by detailed analysis of the lattice of subspaces of an inside-out polytope.
\end{abstract}

\subjclass[2010]{Primary 05A15; Secondary 00A08, 52C07, 52C35.}	

\keywords{Nonattacking chess pieces, fairy chess pieces, Ehrhart theory, inside-out polytope, arrangement of hyperplanes}	

\thanks{Version of \today.}

\thanks{The outer authors thank the very hospitable Isaac Newton Institute for facilitating their work on this project. The inner author gratefully acknowledges support from PSC-CUNY Research Awards PSCOOC-40-124, PSCREG-41-303, TRADA-42-115, TRADA-43-127, and TRADA-44-168.}

\maketitle
\pagestyle{myheadings}
\markright{\textsc{A $q$-Queens Problem. III.  Nonattacking Partial Queens}}\markleft{\textsc{Chaiken, Hanusa, and Zaslavsky}}

\vspace{-.2in}

\tableofcontents

\section{Introduction}\label{intro}

The well known $n$-Queens Problem asks for the number of ways to place $n$ nonattacking queens on an $n\times n$\label{d:n} square chessboard.  A broader question separates the number of pieces from the size of the board; that question is the $q$-Queens Problem, which asks for the number of ways to place $q$\label{d:q} nonattacking queens on an $n\times n$ board.  This article is part of a series in which we develop a general method for solving such questions for pieces of the type called ``riders'', whose moves have unlimited distance \cite{QQs1, QQs2, QQs4, QQs5, QQs6}.  

In Part~I we obtained a general form for the function of $n$ that, given $q$ identical riders, counts the number of nonattacking configurations of those pieces on an $n\times n$ board.
In Part~II we learned that the complexity of that counting function depends on the magnitudes of the numerator and denominator of the slopes of the piece's move directions.  Thus it is natural to focus on ``partial queens''---the rider pieces whose moves are a subset of those of the queen---because they are the pieces for which the move slopes involve only $\pm1$ and $0$.  By narrowing our focus to partial queens we are able to ascertain much more about the counting functions, and in a unified manner.

We proved in Part~I that in each non-attacking placement problem the number of solutions is a quasipolynomial function of $n$---that means it is given by a cyclically repeating sequence of polynomials as $n$ varies---and that the coefficient of each power of $n$ is (up to a factor) a polynomial function of $q$.  In Part~II we found, for instance, that the coefficients of the three highest powers of $n$ do not vary with $n$.  
In the main theorem here, Theorem~\ref{P:hvdiag}, we are able to say much more for partial queens.  Most importantly, we prove explicit formulas for the coefficients of the four highest powers of $n$ and for the leading term (in powers of $q$) of the coefficient of every power of $n$.  Surprisingly, we are even able to obtain formulas for the periodic parts of the coefficients of the next two highest powers of $n$, though not for the nonperiodic parts.  A consequence is that we know something about the period of the quasipolynomial; in particular, if a partial queen does not have both diagonal moves, then all the highest six coefficients are constant.  (That is definitely not true for the bishop or queen---Part~VI proves that the period of the counting quasipolynomial for $q\geq 3$ bishops is 2.)
Furthermore, although the only fully explicit counting function we could find for $q$ arbitrary riders on the square board was for $q=2$ (Theorem~\TutwoP), for partial queens we get the complete the counting function for $3$ nonattacking pieces (Theorem~\ref{T:3Qhk}).

The method of proof is purely an application of the theory from Parts~I and II (whose essentials are reviewed in Section~\ref{essentials}).  The chess problem is converted into a geometry problem in which moves become hyperplanes in $\bbR^{2q}$; the $n\times n$ board becomes the set of $1/(n+1)$-fractional lattice points inside the unit square; and the number of nonattacking configurations becomes a linear combination of the numbers of $q$-tuples of these lattice points that lie in intersections of move hyperplanes.  We explicitly determine all subspaces of codimension 1, 2, and 3 in the lattice of such intersections and count the number of lattice points therein (from which follows the count for $q=3$ pieces).   We further apply our theory to calculate the number of combinatorially distinct configurations of three nonattacking partial queens, which turns out to be determined solely by the number of moves, not which moves they are (Corollary~\ref{C:types23}).  

We mentioned the relative simplicity of partial queens.  A deeper reason we study this set of pieces is that we hope ultimately to discover the factors that control such basic properties of the counting formula as the period of the cyclically repeating polynomials, the periods of the individual coefficients of powers of $n$, formulas for the coefficients in terms of the moves of the piece under consideration, or anything that will let us predict aspects of the counting functions by knowing the moves. 
For this hope, partial queens can be a valuable test set, not as hard as general riders but varied enough to suggest patterns for counting functions---indeed, it was the formulas and their proofs for partial queens that led us to several of the general properties proved in Parts~I and II.  

Our analysis involves a great deal of notation; we append a notational dictionary, which follows an observation and a question in Section~\ref{last} and the detailed subspace analysis in Section~\ref{proofs}.

\sectionpage\section{Essentials}\label{essentials}

\subsection{Review}\

We assume acquaintance with the notation and methods of Parts~I and II as they apply to the square board.   
For easy reference we review the most important here.

The square board consists of the integral points in the interior of the integral multiple $(n+1) [0,1]^2$\label{d:n+1} of the unit square.  
Writing $[n] := \{1,\ldots,n\}$\label{d:[n]}, the set of points of the board is
$$
[n]^2 = (n+1)(0,1)^2 \cap \bbZ^2.
\label{d:[n]2}
$$ 

We write $\delta_{ij}$\label{d:KD} for the Kronecker delta.

A \emph{move} of a piece $\pP$\label{d:P} is the difference between two positions on the board; it may be any integral multiple of a vector in a finite, nonempty set $\M$\label{d:moveset} of \emph{basic moves}.  The latter are non-zero, non-parallel integral vectors $m_r = (c_r,d_r)$\label{d:mr} in lowest terms, i.e., $c_r$ and $d_r$ are relatively prime.  
(The slope $d_r/c_r$ contains all  necessary information and can be specified instead of $m_r$ itself.)  
One piece \emph{attacks} another if the former can reach the latter by a move.  The constraint is that no two pieces may attack one another, or to say it mathematically, if there are pieces at positions $z_i$ and $z_j$, then $z_j-z_i$ is not a multiple of any $m_r$.  
For a move $m=(c,d)$, we define 
$$
\hatc:=\min(|c|,|d|), \quad \hatd:=\max(|c|,|d|).
\label{d:cdhat}
$$

We assume that $q>0$.  
We treat configurations of $q$ pieces as $1/(n+1)$-fractional lattice points in the $2q$-dimen\-sion\-al inside-out polytope $(\ocube,\cA_\pP)$,\label{d:cP} where $\cA_\pP$ is the \emph{move arrangement}\label{d:AP} whose members are the \emph{move hyperplanes} (or \emph{attack hyperplanes}) 
$$
\cH^{d/c}_{ij} := \{ \bz \in \bbR^{2q} : (z_j - z_i) \cdot (d,-c) = 0 \}.
\label{d:slope-hyp}
$$ 
(Inside-out polytopes are explained in Section~\background.)  The equation of a move hyperplane is called a \emph{move equation} or \emph{attack equation}.  
We view a coordinate vector $\bz\in\bbR^{2q}$ as consisting of $q$ planar vectors, $\bz=(z_1,z_2,\ldots,z_q)$,\label{d:config} where $z_i=(x_i,y_i)\in \bbR^2$.  
The \emph{intersection lattice} $\cL(\cA_\pP)$\label{d:L} is the lattice of all intersections of subsets of the move arrangement, ordered by reverse inclusion; its M\"obius function is $\mu$\label{d:mu}.  
Of the $1/(n+1)$-fractional points in $\ocube$, those in move hyperplanes represent attacking configurations, which we exclude by M\"obius inversion over the intersection lattice; the others represent nonattacking configurations.  
The number of nonattacking configurations of $q$ unlabelled pieces on an $n\times n$ board is $u_\pP(q;n)$\label{d:indistattacks}, whose full expression is 
\begin{equation*}
u_\pP(q;n) = \gamma_0(n) n^{2q} + \gamma_1(n) n^{2q-1} + \gamma_2(n) n^{2q-2} + \cdots + \gamma_{2q}(n) n^0.
\label{d:gamma}
\end{equation*}
What we actually compute is the number of nonattacking labelled configurations, $o_\pP(q;n)$,\label{d:distattacks} which equals $q!u_\pP(q;n)$.  
The Ehrhart theory of inside-out polytopes implies that these counting functions are quasipolynomials in $n$. 

In Part~II we defined $\alpha(\cU;n)$\label{d:alphaU} as the number of points in the intersection of the essential part of an intersection subspace $\cU\in\cL(\cA_\pP)$\label{d:U} with the integral hypercube $[n]^{2\kappa}$, where $\kappa$\label{d:kappa} is the number of pieces involved in the move equations defining $\cU$.  (The \emph{essential part} is the restriction of $\cU$ to the coordinate subspace of $\bbR^{2q}$ that involves only the coordinates of pieces that appear in those equations.)  By Ehrhart theory, $\alpha(\cU;n)$ is a quasipolynomial in $n$ of degree $2\kappa-\nu$ with constant leading coefficient, where $\nu$ is the codimension of $\cU$.  
The formula
\begin{equation}\label{E:iopmu}
q!u_{\pQ^{hk}}(q;n) = \sum_{\cU\in\cL(\cA_\pP)} \mu(\hat0,\cU) \alpha(\cU;n) n^{2q-2\kappa},
\end{equation}
from Equation~\Eiopmu\ with $t=n+1$ and $E_{\cU \cap \cP^\circ}(t) = \alpha(\cU;n) n^{2q-2\kappa}$, is the foundation stone of this paper.
We also defined the abbreviations 
$$
\alpha^{d/c}(n) := \alpha(\cH_{12}^{d/c};n),
\label{d:adc}
$$  
the number of ordered pairs of positions that attack each other along slope $d/c$ (they may occupy the same position; that is considered attacking).  Similarly,  
$$
\beta^{d/c}(n) := \alpha(\cW_{123}^{\,d/c};n),
\label{d:bdc}
$$
the number of ordered triples that are collinear along slope $d/c$; $\cW_{123}^{\,d/c} := \cH_{12}^{d/c} \cap \cH_{23}^{d/c}$.\label{d:Wdc}
Proposition~\pattackdc\ gives general formulas for $\alpha$ and $\beta$.
We need only a few examples in Part~III:
\begin{equation}
\begin{aligned}
\alpha^{0/1}(n) = \alpha^{1/0}(n) &= n^3, \qquad
\alpha^{\pm1/1}(n) = \frac{2n^3+n}{3}, 
\\
\beta^{0/1}(n) = \beta^{1/0}(n) &= n^4, \qquad
\beta^{\pm1/1}(n) = \frac{n^4+n^2}{2} .
\end{aligned}
\label{E:attacklines}
\end{equation}

\subsection{Partial queens}\

A \emph{partial queen} is a piece $\pQ^{hk}$,\label{d:partQ} whose moves are $h$ horizontal and vertical moves and $k$ diagonal moves of slopes $\pm1$, where $h, k \in \{0,1,2\}$ and (to avoid the trivial case $\M=\eset$) we assume $h+k\geq1$.  This includes the cases of the bishop ($h=0$ and $k=2$) and the queen ($h=k=2$), and allows for pieces such as the \emph{semiqueen} ($h=2$ and $k=1$) and the \emph{anassa} ($h=k=1$).\footnote{\Kot\ calls our anassa ``semi-rook + semi-bishop'' but we want it to have a distinctive name.  ``Anassa'' is archaic Greek feminine for a tribal chief, i.e., presumably for the consort of a chief \cite{Anax}.}  By restricting to partial queens it is possible to explicitly calculate the contributions to $q!u_{\pQ^{hk}}(q;n)$ of intersection subspaces up to codimension 3.  
From this, we can calculate the coefficients $\gamma_1$, $\gamma_2$, and $\gamma_3$ and the counting quasipolynomials $u_{\pQ^{hk}}(2;n)$ and $u_{\pQ^{hk}}(3;n)$.

\sectionpage\section{Coefficients}\label{coeffs}

\Kot\ proposed formulas for the coefficients $\gamma_1$ and $\gamma_2$ of the counting quasipolynomials for queens and bishops and other riders~\cite[third ed., pp.\ 13, 210, 223, 249, 652, 663; also in later eds.]{ChMath}.  Our main theorem proves the generalization of his conjectures to partial queens and to $\gamma_3$: our formulas for $\gamma_3$ for the queen $\pQ^{22}$, the anassa $\pQ^{11}$, the semiqueen $\pQ^{21}$, and the \emph{trident} $\pQ^{12}$ are new.  
(They have been supported in small cases by calculations by \Kot; in fact, we used his collection of formulas for the queen and bishop to help correct errors in algebra.) 

\begin{thm}\label{P:hvdiag}
{\rm(I)}  For a partial queen $\pQ^{hk}$, the coefficient $q!\gamma_i$ of $n^{2q-i}$ in $o_{\pQ^{hk}}(q;n)$ is a polynomial in $q$, periodic in $n$, with leading term 
$$
\left(-\,\frac{3h+2k}{6}\right)^i \frac{q^{2i}}{i!}.
$$

{\rm(II)}  The coefficients $\gamma_0,\ldots,\gamma_4$ of the five highest powers of $n$ in the quasipolynomial $u_{\pQ^{hk}}(q;n)$ are independent of $n$.

The coefficients $\gamma_i$ for $i=1,2,3$ are given by
\begin{equation}\label{eq:gamma2q-1}
\gamma_1 = -\, \frac{1}{(q-2)!} \bigg\{ \frac{3h+2k}{6} \bigg\} ,
\end{equation}
\begin{equation}
\begin{aligned}
\gamma_2 
&= \frac{1}{2!(q-2)!} \bigg\{ (q-2)_2 \Big(\frac{3h+2k}{6}\Big)^2 + (q-2) \frac{4h+2k+8hk+12\delta_{h2}+5\delta_{k2}}{6} + (h+k-1) \bigg\} ,
\end{aligned}
\label{eq:gamma2q-2}
\end{equation}
and
\begin{equation}
\begin{aligned}
\gamma_3 = 
-\,\frac{1}{3!(q-2)!} \bigg\{& 
(q-2)_4\Big(\frac{3h+2k}{6}\Big)^3 
\\& 
+ (q-2)_3\frac{(3h+2k)(4 h+8 h k+2 k+12  \delta_{h2}+5  \delta_{k2})}{12} 
\\&
+(q-2)_2\frac{30 h^2+257 h k+20 k^2-8 k+40 (8 k+9)  \delta_{h2}+4 (51 h+26)  \delta_{k2}}{20}
\\&
+ (q-2)\frac{12h(h-1)+20 h k+8k(k-1)+8k \delta_{h2}+5 h\delta_{k2}}{2}
\\&
+ k 
\bigg\}.
\end{aligned}
\label{eq:gamma2q-3}
\end{equation}

{\rm(III)}  The next coefficient, $\gamma_5$, is constant except that it has period $2$ if $k=2$ and $h\neq0$ (and $q\geq3$), with periodic part $- (-1)^n h/8(q-3)!$.  
\end{thm}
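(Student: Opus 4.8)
The plan is to read each coefficient off the master formula \eqref{E:iopmu} by grouping the intersection subspaces $\cU\in\cL(\cA_\pP)$ by codimension. Because $\alpha(\cU;n)$ is a quasipolynomial of degree $2\kappa-\codim\cU$, the summand $\mu(\hat0,\cU)\,\alpha(\cU;n)\,n^{2q-2\kappa}$ has degree $2q-\codim\cU$ in $n$; hence only subspaces with $\codim\cU\le i$ affect the coefficient of $n^{2q-i}$, and each such $\cU$ contributes to $q!\gamma_i$ exactly $\mu(\hat0,\cU)$ times the coefficient of $n^{2\kappa-i}$ in $\alpha(\cU;n)$. The whole theorem thus reduces to two tasks: classify the subspaces of codimension at most $6$ into finitely many combinatorial types, and, for each type, evaluate its Möbius value $\mu(\hat0,\cU)$ together with the top several Ehrhart coefficients of $\alpha(\cU;n)$. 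The single- and double-move counts recorded in \eqref{E:attacklines}, and Proposition~\pattackdc\ in general, supply these Ehrhart data for the low-codimension types.

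For Part~(I) I would isolate the dominant power of $q$. Among the subspaces of codimension exactly $i$, those involving the most pieces are intersections of $i$ move hyperplanes supported on $i$ pairwise-disjoint pairs; such a $\cU$ has $\kappa=2i$, has $\mu(\hat0,\cU)=(-1)^i$ because its lower interval is Boolean, and has $\alpha(\cU;n)$ equal to the product of the $i$ single-move counts. Summing the leading $n$-terms over the $\tfrac1{i!}(\tfrac{q^2}{2})^i(1+o(1))$ choices of disjoint pairs, and over the per-pair total of leading Ehrhart coefficients $h\cdot1+k\cdot\tfrac23=\tfrac{3h+2k}{3}$, yields precisely $\big(-(3h+2k)/6\big)^i q^{2i}/i!$. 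It then remains to note that every other subspace of codimension at most $i$ involves strictly fewer than $2i$ pieces and so contributes a strictly lower power of $q$; the polynomiality in $q$ and the degree bookkeeping are furnished by Part~II (Theorems~\TutwoP\ and \Tnearleadingzero), which also yield the ``polynomial in $q$, periodic in $n$'' assertions.

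For Part~(II) I would carry the classification out explicitly through codimension $4$. Codimension $1$ has only the hyperplanes $\cH_{ij}^{s}$ and gives $\gamma_1$ at once. Codimension $2$ splits into disjoint pairs of hyperplanes, the collinearities $\cW_{ijk}^{s}$ (counted by $\beta^{s}$), two hyperplanes sharing a piece with unequal slopes, and the coincidences $\cH_{ij}^{s}\cap\cH_{ij}^{s'}$ that force $z_i=z_j$; assembling their weighted Ehrhart contributions produces $\gamma_2$, the symbols $\delta_{h2},\delta_{k2}$ recording the presence of both horizontal-vertical, respectively both diagonal, slopes. Codimension $3$ breaks into many more types (three disjoint moves, a collinearity together with a disjoint move, the several three-piece ``chains'' and ``stars'' in mixed slopes, and the triple coincidences), and the same procedure yields $\gamma_3$. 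Finally I would verify that each codimension-$4$ type contributes only a term constant in $n$ at the relevant degree, which establishes the constancy of $\gamma_4$ and finishes Part~(II).

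For Parts~(III) and (IV) I would locate the period. No single diagonal can produce one, since $\alpha^{\pm1/1}$ and $\beta^{\pm1/1}$ in \eqref{E:attacklines} are genuine polynomials, and the leading (relative-volume) Ehrhart coefficient of any fixed subspace is constant in $n$; by the Part~II bound the only available period is $2$, carried by sub-leading Ehrhart coefficients. Such period-$2$ terms are generated by three-piece configurations that combine the two opposite diagonal directions against the finite board (whence the factor $\delta_{k2}$), together with a horizontal or vertical move in the case feeding $\gamma_5$ (whence $\delta_{k2}$ with $h\ne0$); these types sit at codimension $4$ and above, so by the degree count their volume terms fall into the already-constant $\gamma_4$ while their first and second sub-leading coefficients are exactly what reach $\gamma_5$ and $\gamma_6$. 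Computing only the period-$2$ part of the relevant Ehrhart coefficient of each such type, with its Möbius weight and its $\tbinom{q}{3}$ count of spectator pieces, should return $-(-1)^n h\,\delta_{k2}/8(q-3)!$ and $-(-1)^n\,\delta_{k2}/8(q-3)!$. The main obstacle is Part~(II): the number of subspace types grows rapidly with codimension, several are geometrically delicate (collinearities, coincidences, and mixed-slope incidences interact), and a single miscounted type or mis-evaluated low-order Ehrhart coefficient corrupts the polynomial-in-$q$ formulas; Parts~(III) and (IV) are comparatively contained once the period mechanism and its codimension are pinned down.
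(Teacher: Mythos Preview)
Your overall plan for Parts~(I) and (II) matches the paper's: group the subspaces in \eqref{E:iopmu} by codimension, classify each codimension into finitely many combinatorial types, and read off the Ehrhart and M\"obius data type by type. The paper obtains Part~(I) by citing Theorem~\Tgammapolysquare\ directly rather than redoing the disjoint-pairs argument, but your sketch of that argument is sound. For Part~(II) the paper carries the classification only through codimension~$3$ (packaged as two lemmas giving the full contributions of codimensions $\le2$ and of codimension~$3$); constancy of $\gamma_4$ then follows because the codimension-$4$ subspaces enter $\gamma_4$ only through their leading (volume) terms, while the lower codimensions have already been seen to contribute constants at $n^{2q-4}$.

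Your treatment of Parts~(III) and (IV), however, has a genuine error: you locate the period-$2$ contributions at codimension $4$ and above, but they actually arise at codimensions $3$ and $2$. The periodic part of $\gamma_5$ comes from the codimension-$3$ triangle type $\cU_{3\mathrm{a}}^3$, Case~$\triangle_2$ (three pieces with hypotenuse on a horizontal or vertical line and legs on the two diagonals), whose $\alpha$-count has a $(-1)^n$ term already in its $n^{2q-5}$ coefficient. The periodic part of $\gamma_6$ comes from the codimension-$2$ type $\cU_{3\mathrm{b}}^2$, Case~DD (three pieces, one pair on each diagonal slope), whose count contains $-(-1)^n\tfrac18$ at the $n^{2q-6}$ level. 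In fact Theorem~\Tnearleadingzero\ does the opposite of what your sketch needs: it shows that codimension-$4$ subspaces contribute \emph{zero} to the $n^{2q-5}$ coefficient (and codimension-$5$ subspaces zero to $n^{2q-6}$), so no period can come from there. For $\gamma_6$ one must additionally argue that the codimension-$4$ contribution to $n^{2q-6}$ has period $\lcm\{1\}=1$. Thus the mechanism you propose---sub-leading Ehrhart coefficients of codimension-$4$ types---is precisely what the argument rules out; the periodicity is already visible in the codimension-$2$ and codimension-$3$ data you would compute for Part~(II).
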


We write the falling factorials in terms of $q-2$ instead of $q$ because every nontrivial coefficient $\gamma_i$ ($\gamma_0=1/q!$ being ``trivial'') has a numerator factor $(q)_j$ with $j\geq2$ and a denominator factor $q!$ (since $u_\pP = o_\pP/q!$).  Therefore $u_\pP(q;n)$ as a whole looks like 
$$\frac{n^{2q}}{q!} + \frac{(q)_2\text{(nontrivial quasipolynomial in $n$ and $q$)}}{q!}.$$  
It seems natural to cancel the repetitious factor $(q)_2$ in every coefficient other than $\gamma_0$.  

Curiously, $\delta_{h2}=h(h-1)/2$ and $\delta_{k2}=k(k-1)/2$ because $h,k\in\{0,1,2\}$.  Thus, the expressions involving these Kronecker deltas can be written as polynomials in $h$ and $k$.  We do not see a reason to prefer one form over the other.

Tables~\ref{Tb:pqueensgamma2}--\ref{Tb:pqueensgamma3} give the explicit formulas for the coefficients $\gamma_2$ and $\gamma_3$ for the partial queens.

\begin{table}[ht]
\begin{tabular}{|l|l|l|} \hline
\quad \emph{Name} & $(h,k)$ \vstrut{15pt} &\quad $\gamma_2$ 
\\[4pt] \hline
Semi-rook & $(1,0)$ 
\vstrut{22pt} 
& $\displaystyle\frac{1}{2!(q-2)!}  \Big\{  \Big(\frac{1}{2}\Big)^2(q-2)_2+\frac{2}{3}(q-2)\Big\}$
\\[10 pt] 
Rook & $(2,0)$ 
&$\displaystyle\frac{1}{2!(q-2)!}  \Big\{  (1)^2(q-2)_2+\frac{10}{3}(q-2)+1\Big\}$
\\[10 pt] 
Semibishop & $(0,1)$ 
& $\displaystyle\frac{1}{2!(q-2)!}  \Big\{  \Big(\frac{1}{3}\Big)^2(q-2)_2+\frac{1}{3}(q-2)\Big\} $
\\[10 pt] 
Anassa & $(1,1)$ 
& $\displaystyle\frac{1}{2!(q-2)!} \Big\{ \Big(\frac{5}{6}\Big)^2(q-2)_2  +  \frac{7}{3} (q-2)+ 1 \Big\} $
\\[10 pt] 
Semiqueen & $(2,1)$ 
& $\displaystyle\frac{1}{2!(q-2)!} \Big\{ \Big(\frac{4}{3}\Big)^2(q-2)_2  +  \frac{38}{6} (q-2)+ 2 \Big\}$
\\[10 pt] 
Bishop & $(0,2)$ 
& $\displaystyle\frac{1}{2!(q-2)!} \Big\{  \Big(\frac{2}{3}\Big)^2 (q-2)_2 + 3 (q-2) + 1 \Big\}$
\\[10 pt] 
Trident & $(1,2)$ 
& $\displaystyle\frac{1}{2!(q-2)!} \Big\{  \Big(\frac{7}{6}\Big)^2(q-2)_2 + \frac{29}{6}(q-2)  + 2 \Big\}$
\\[10 pt] 
Queen & $(2,2)$ 
& $\displaystyle\frac{1}{2!(q-2)!} \Big\{  \Big(\frac{5}{3}\Big)^2 (q-2)_2 + \frac{61}{6} (q-2) + 3 \Big\} $
\\ [12 pt] \hline
\end{tabular}
\bigskip
\caption{The coefficient $\gamma_2$ for the various partial queens.} 
\label{Tb:pqueensgamma2}
\end{table}
\begin{table}[ht]
\begin{tabular}{|l|l|} \hline
$(h,k)$ \vstrut{15pt} &\qquad $\gamma_3$ 
\\[4pt] \hline
$(1,0)$ 
\vstrut{22pt} 
&$\displaystyle- \frac{1}{3!(q-2)!} \Big\{ \Big(\frac{1}{2}\Big)^3(q-2)_4 + (q-2)_3 + \frac{3}{2}(q-2)_2  \Big\}$
\\[10 pt] 
$(2,0)$ 
&$\displaystyle- \frac{1}{3!(q-2)!} \Big\{ (1)^3(q-2)_4 + 10(q-2)_3 + 24(q-2)_2 + 12(q-2) \Big\}$
\\[10 pt] 
$(0,1)$ 
& $\displaystyle- \frac{1}{3!(q-2)!} \Big\{ \Big(\frac{1}{3}\Big)^3(q-2)_4 + \frac{1}{3}(q-2)_3 + \frac{3}{5}(q-2)_2 + 1 \Big\}$
\\[10 pt] 
$(1,1)$ 
& $\displaystyle- \frac{1}{3!(q-2)!} \Big\{ 
\Big(\frac{5}{6}\Big)^3(q-2)_4 + \frac{35}{6}(q-2)_3 +\frac{299}{20} (q-2)_2+ 10(q-2) + 1 \Big\}$
\\[10 pt] 
$(2,1)$ 
& $\displaystyle- \frac{1}{3!(q-2)!} \Big\{ 
\Big(\frac{4}{3}\Big)^3(q-2)_4 + \frac{76}{3}(q-2)_3 +\frac{663}{10}(q-2)_2 + 36(q-2) + 1 
\Big\}$
\\[10 pt] 
$(0,2)$ 
& $\displaystyle-\,\frac{1}{3!(q-2)!} \Big\{ 
\Big(\frac{2}{3}\Big)^3 (q-2)_4 + 3(q-2)_3 + 10(q-2)_2 + 8(q-2) + 2 
\Big\}$
\\[10 pt] 
$(1,2)$ 
& $\displaystyle- \frac{1}{3!(q-2)!} \Big\{ 
\Big(\frac{7}{6}\Big)^3(q-2)_4 + \frac{203}{12}(q-2)_3 +\frac{458}{10}(q-2)_2 + \frac{61}{2}(q-2) + 2 
\Big\}$
\\[10 pt] 
$(2,2)$ 
& $\displaystyle-\, \frac{1}{3!(q-2)!} \Big\{ \Big(\frac{5}{3}\Big)^3 (q-2)_4 + \frac{305}{6} (q-2)_3 + \frac{681}{5} (q-2)_2 + 73 (q-2) + 2 \Big\} $
\\ [12 pt] \hline
\end{tabular}
\bigskip
\caption{The coefficient $\gamma_3$ for the various partial queens.} 
\label{Tb:pqueensgamma3}
\end{table}
%


Theorem~\ref{P:hvdiag} yields a nice corollary for pieces with only one diagonal move (or none, but those pieces, the rook and half-rook of Section~\R, are elementary).

\begin{cor}\label{C:Qh1}
When the piece is a partial queen $\pQ^{hk}$ with $k<2$, the six leading coefficients, $\gamma_i$ for $i\leq5$, are independent of $n$.
\end{cor}

\begin{proof}[{Proof of Theorem~\ref{P:hvdiag}}] \ 
Theorem~\Tgammapolysquare\ says that $(q)_{2i}$ gives the highest power of $q$ and its coefficient is $(-a_{10}/2)^i/i!$, where $a_{10} = \sum_{(c,d)\in\M} (3\hatd-\hatc)/3\hatd^2 = h\frac{3}{3}+k\frac{2}{3}$ since there are $h$ moves with $(\hatc,\hatd)=(0,1)$ and $k$ with $(\hatc,\hatd)=(1,1)$.

The coefficient $\gamma_1$ is from Theorem~\Tgammapolysquare.  For the other coefficients we prove two lemmas that state the total contributions to $u_{\pQ^{hk}}(q;n)$ from subspaces of all codimensions $\nu\leq3$, the proof of which, involving case-by-case analysis, we postpone to Section~\ref{proofs}.

\begin{lem}\label{L:codim012}
The contributions to $u_{\pQ^{hk}}(q;n)$ from subspaces of codimension $\nu\leq2$ are as follows.

{\rm(I)}  From $\codim\cU=0${\rm:}  
\begin{equation}
\frac{1}{q!} n^{2q}.
\label{E:codim0}
\end{equation}

{\rm(II)}  From $\codim\cU=1${\rm:}  
\begin{equation}
-\frac{1}{q!} \left\{ (q)_2 \frac{3h+2k}{6} n^{2q-1} + (q)_2 \frac{k}{6} n^{2q-3} \right\}.
\label{E:codim1}
\end{equation}

{\rm(III)}  From $\codim\cU=2${\rm:}  
\begin{equation}
\begin{aligned}
\frac{1}{q!} \bigg\{ &\bigg[ (q)_4 \frac12 \Big(\frac{3h+2k}{6}\Big)^2 + (q)_3 \frac{4h+2k+8hk+12\delta_{h2}+5\delta_{k2}}{12} + (q)_2 \frac{h+k-1}{2} \bigg] n^{2q-2} \\
&+ \bigg[ (q)_4 \frac{k(3h+2k)}{36} + (q)_3 \frac{k(2h+1)+2\delta_{k2}}{6} \bigg] n^{2q-4} \\
&+ \bigg[ (q)_4 \frac{k^2}{72} + (q)_3 \big[1-(-1)^n\big] \frac{\delta_{k2}}{8} \bigg] n^{2q-6} \bigg\}.
\end{aligned}
\label{E:codim2}
\end{equation}
\end{lem}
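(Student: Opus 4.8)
The plan is to evaluate the master formula~\eqref{E:iopmu} subspace by subspace, grouping the terms first by the codimension $\nu$ and then by the combinatorial type of $\cU$. Parts~(I) and~(II) are immediate. The unique subspace of codimension $0$ is $\hat0$, with $\mu(\hat0,\hat0)=1$ and $\kappa=0$, contributing $n^{2q}/q!$. The subspaces of codimension $1$ are exactly the move hyperplanes $\cH_{ij}^{d/c}$, each with $\mu(\hat0,\cH_{ij}^{d/c})=-1$ and $\kappa=2$; summing over the $\binom{q}{2}$ pairs $\{i,j\}$ and over the $h$ horizontal/vertical slopes (for which $\alpha^{d/c}=n^3$) and the $k$ diagonal slopes (for which $\alpha^{d/c}=(2n^3+n)/3$) from~\eqref{E:attacklines}, then dividing by $q!$, produces~\eqref{E:codim1} after collecting the powers $n^{2q-1}$ and $n^{2q-3}$.

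The work is in Part~(III). First I would show that every codimension-$2$ subspace is of exactly one of four types, distinguished by the number $\kappa$ of pieces in its equations and by whether it decomposes: (a)~the doubled point $\{z_i=z_j\}$ on $\kappa=2$ pieces, the common intersection of all $h+k$ hyperplanes on the pair $\{i,j\}$; (b)~the collinear triple $\cW_{ijl}^{\,d/c}$ on $\kappa=3$ pieces, the intersection of the three same-slope hyperplanes among $\{i,j,l\}$; (c)~the bent triple $\cH_{ij}^{s_1}\cap\cH_{jl}^{s_2}$ on $\kappa=3$ pieces with a shared central piece $j$ and $s_1\neq s_2$; and (d)~the decomposable pair $\cH_{ij}^{s_1}\cap\cH_{lm}^{s_2}$ on $\kappa=4$ pieces with $\{i,j\}\cap\{l,m\}=\eset$. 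For our slope set $\{0/1,\,1/0,\,\pm1/1\}$ one checks there are no unexpected incidences (no further hyperplane contains any of these flats), so the interval $[\hat0,\cU]$ has the expected shape: types~(a) and~(b) are the top of a rank-$2$ interval with $h+k$ and $3$ atoms, giving $\mu=h+k-1$ and $\mu=2$; type~(c) has exactly $2$ atoms, giving $\mu=1$; and type~(d) is a direct product, giving $\mu=(-1)(-1)=1$.

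Three of the four types are then routine. The doubled point has $\alpha=n^2$ and occurs $\binom{q}{2}$ times, yielding the term $(q)_2\tfrac{h+k-1}{2}\,n^{2q-2}$. The collinear triple has $\alpha=\beta^{d/c}$ from~\eqref{E:attacklines} and occurs $(h+k)\binom{q}{3}$ times, yielding the $4h+2k$ part of the $(q)_3$ coefficients. For the decomposable pair the two constraints are independent, so $\alpha=\alpha^{s_1}(n)\,\alpha^{s_2}(n)$; summing over the unordered pairs of disjoint piece-pairs and over the slope on each collapses to $\tfrac{(q)_4}{8}\,W(n)^2\,n^{2q-8}$, where $W(n):=\sum_{\text{slopes}}\alpha^{d/c}(n)=\tfrac{3h+2k}{3}n^3+\tfrac{k}{3}n$, and expanding $W(n)^2$ delivers all three $(q)_4$ coefficients at once. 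Among the bent triples, those with at least one horizontal/vertical slope are also elementary: such a move fixes a full length-$n$ line through the central piece, so $\alpha=n\cdot\alpha^{s_2}(n)$ (or $n^2$ when both slopes are horizontal/vertical), and these cases account for the $8hk$ and $12\delta_{h2}$ contributions.

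The main obstacle is the last case, the bent triple with both diagonal slopes, which occurs only when $k=2$ and hence carries the factor $\delta_{k2}$. Here $\alpha=\sum_{z\in[n]^2}f_+(z)\,f_-(z)$, where $f_\pm(z)$ is the number of board points on the slope-$\pm1$ line through $z$; explicitly $f_+(x,y)=n-|x-y|$ and $f_-(x,y)=n-|x+y-n-1|$. This is a genuine correlation between the two diagonal lengths, and its exact evaluation is where the period-$2$ phenomenon is born: summing this product over the board leaves a residue that depends on the parity of $n$. I expect the closed form $\sum_z f_+(z)\,f_-(z)=\tfrac{5}{12}n^4+\tfrac13 n^2+\tfrac{1-(-1)^n}{8}$, whose leading coefficient is confirmed by the continuous model $\int_{[0,1]^2}(1-|x-y|)\,(1-|x+y-1|)\,dx\,dy=\tfrac{5}{12}$. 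There are $(q)_3\delta_{k2}$ such subspaces, each with $\mu=1$, so they supply the remaining $5\delta_{k2}$ and $2\delta_{k2}$ terms in the $n^{2q-2}$ and $n^{2q-4}$ coefficients and, decisively, the period-$2$ term $[1-(-1)^n]\delta_{k2}/8$ in the $n^{2q-6}$ coefficient. Assembling the four types and collecting like powers of $n$ then gives~\eqref{E:codim2}, completing the lemma.
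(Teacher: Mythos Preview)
Your proposal is correct and follows essentially the same route as the paper's proof: the same four codimension-$2$ types (the paper's $\cU_2^2$, $\cU_{3\mathrm{a}}^2$, $\cU_{3\mathrm{b}}^2$, $\cU_{4^*}^2$), the same M\"obius values, and the same VH/DV/DD case split for the bent triple. The only notable difference is cosmetic: you package the DD count as $\sum_{z\in[n]^2} f_+(z)\,f_-(z)$, whereas the paper reduces by four-fold rotational symmetry to a sum over a triangle of positions---both yield the identical closed form $\tfrac{5}{12}n^4+\tfrac13 n^2+\tfrac{1-(-1)^n}{8}$; incidentally, your parenthetical ``or $n^2$'' in the VH case should read $n^4$ (i.e., $n\cdot\alpha^{s_2}(n)=n\cdot n^3$), though your stated $12\delta_{h2}$ contribution is correct.
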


\begin{lem}\label{L:codim3}
The total contribution to $u_{\pQ^{hk}}(q;n) = \frac{1}{q!} o_\pP(q;n)$ from subspaces of codimension $3$ is
\begin{align*}
-\frac{1}{q!} &\bigg\{ 
\bigg[ \ 
n^{2q-3} \bigg( 
(q)_3\frac{12h(h-1)+20hk+8k(k-1)+8 k \delta_{h2}+5 h\delta_{k2}}{12}
\\&\qquad\qquad 
+(q)_4\frac{30 h^2+257 h k+20 k^2-8 k+40 (8 k+9) \delta_{h2}+ 4(51 h+26) \delta_{k2}}{120}
\\&\qquad\qquad 
+(q)_5\frac{(3h+2k)(4 h+8 h k+2 k+12 \delta_{h2}+5 \delta_{k2})}{72} 
+(q)_6\frac{(3h+2k)^3}{1296} 
\bigg) 
\\&\quad 
+ n^{2q-5} \bigg( 
(q)_3\frac{8 k(h+k-1)+8 k\delta_{h2}+11 h \delta_{k2}}{24}
\\&\qquad\qquad 
+(q)_4\frac{k(31 h+2k+2)+32k \delta_{h2}+(34 h+24) \delta_{k2}}{24}
\\&\qquad\qquad 
+(q)_5\frac{2 k \left(6 h^2+8 h k+5 h+3 k\right)+12 k  \delta_{h2}+(12 h+13 k) \delta_{k2}}{72}
\\&\qquad\qquad 
+(q)_6\frac{k (3h+2k)^2}{432} 
\bigg) 
\\&\quad 
+ n^{2q-7} \bigg( 
(q)_4\frac{2k(4h-1)+(61h+76) \delta_{k2}}{120}\\
&\qquad\qquad 
+(q)_5\frac{4k^2(2h+1)+(9h+14k)\delta_{k2}}{144}
+(q)_6\frac{k^2 (3h+2k)}{432} \bigg)
\\&\quad 
+ n^{2q-9} \bigg( 
(q)_5 \frac{k \delta_{k2}}{48} 
+(q)_6 \frac{k^3}{1296} 
\bigg) \bigg] \\&
- (-1)^n \delta_{k2} \bigg[ 
n^{2q-5} (q)_3 \frac{h}{8} + n^{2q-7} \bigg( (q)_4\frac{3h+4}{8} + (q)_5\frac{3h+2k}{48} 
\bigg) 
+ n^{2q-9} (q)_5 \frac{k}{48} 
\bigg]
\bigg\}.
\end{align*}
\end{lem}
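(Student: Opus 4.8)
The plan is to follow exactly the scheme of Lemma~\ref{L:codim012}: enumerate every type of codimension-$3$ subspace $\cU\in\cL(\cA_{\pQ^{hk}})$, and for each type record the M\"obius value $\mu(\hat0,\cU)$, the number of ways to choose and label the $\kappa$ pieces that appear in its equations, and the Ehrhart quasipolynomial of $\cU\cap\ocube$; then apply Equation~\eqref{E:iopmu}, multiply each type's point count by $n^{2(q-\kappa)}$, sum, collect by powers of $n$ and by the falling factorials $(q)_\kappa$, and divide by $q!$. Since three attack hyperplanes involve between three and six pieces---two pieces cannot support a codimension-$3$ subspace, because the constraints depend only on $z_j-z_i\in\bbR^2$ and so cut the $4$-dimensional $(z_1,z_2)$-space down by at most $2$---the types are organized by $\kappa\in\{3,4,5,6\}$, and the resulting factorials $(q)_3,(q)_4,(q)_5,(q)_6$ are precisely those in the statement.

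First I would dispose of the decomposable types by multiplicativity. A codimension-$3$ subspace whose equations split across independent blocks of pieces has a M\"obius value and an Ehrhart quasipolynomial that both factor over the blocks, since $[\hat0,\cU]$ is then a product of intervals. Thus the fully split type $\cU_{6^*}^3{:}\cU_2^1\cU_2^1\cU_2^1$, the five-piece types $\cU_{5^*\mathrm{a}}^3{:}\cU_2^1\cU_{3\mathrm{a}}^2$ and $\cU_{5^*\mathrm{b}}^3{:}\cU_2^1\cU_{3\mathrm{b}}^2$, and the four-piece type $\cU_{4^*}^3{:}\cU_2^1\cU_2^2$ can all be assembled from the hyperplane point counts $\alpha^{d/c}$ of Equation~\eqref{E:attacklines} and the codimension-$2$ data already tabulated in the proof of Lemma~\ref{L:codim012}, together with the product rule $\mu(\hat0,\cU)=\prod_i\mu(\hat0,\cU_i)$. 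The only bookkeeping is to count the orderings of the blocks correctly, as in Type $\cU_{4^*}^2$, and to sum over all admissible assignments of the available slopes, which is where the dependence on $h$, $k$, and the Kronecker deltas $\delta_{h2},\delta_{k2}$ enters.

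The substance of the lemma lies in the indecomposable types on three pieces ($\cU_3^3$) and on four pieces ($\cU_4^3$). For these I would, type by type, compute $\mu(\hat0,\cU)$ by analyzing the interval $[\hat0,\cU]$ in $\cL(\cA)$, using Lemma~\LWmu\ for the sublattices generated by single collinearity relations, and compute the Ehrhart quasipolynomial of $\cU\cap\ocube$ by the geometric counting used for $\alpha$ and $\beta$ in Proposition~\pattackdc, i.e.\ placing a first piece freely and propagating the slope constraints along the incidence structure. For three pieces this means enumerating the realizable ``triangles'' of prescribed slopes (for instance a horizontal leg, a vertical leg, and a $\pm1$ hypotenuse, which forces the two legs to be equal) together with the coincidence-plus-collinearity configurations; for four pieces the connected incidence graph on four vertices with three distinct-pair edges is a spanning tree, so one enumerates the path and star patterns with their slope labels. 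Each configuration contributes a leading term $n^{2\kappa-3}$, hence $n^{2q-3}$ after the $n^{2(q-\kappa)}$ factor, plus lower-order corrections that feed the $n^{2q-5}$, $n^{2q-7}$, and $n^{2q-9}$ terms.

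The hard part will be the point counts for configurations that mix diagonal slopes with one another or with horizontal or vertical slopes, exactly as in Case DD of Lemma~\ref{L:codim012}. There the restriction of a diagonal-diagonal incidence to the integer square produced the parity-dependent term $-(-1)^n/8$, and the analogous restrictions at codimension $3$ are the sole source of the $(-1)^n$ corrections collected in the final bracket of the statement; extracting them requires the same rotate-the-triangle lattice-point summations as in Figure~\ref{fig:square}, now one dimension higher and correspondingly more delicate near the center and along the boundary of the board. Isolating these parity contributions, keeping the $\delta_{h2},\delta_{k2}$ case distinctions straight, and checking that the indecomposable M\"obius values combine with the decomposable contributions to yield exactly the stated numerators is where essentially all the difficulty resides; the remaining work is the routine collection of like powers of $n$ and $q$.
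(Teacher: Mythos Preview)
Your plan is correct and matches the paper's proof essentially step for step: the paper enumerates exactly the decomposable types $\cU_{6^*}^3$, $\cU_{5^*\mathrm a}^3$, $\cU_{5^*\mathrm b}^3$, $\cU_{4^*}^3$ via multiplicativity, and the indecomposable types $\cU_{3\mathrm a}^3$, $\cU_{3\mathrm b}^3$ on three pieces and $\cU_{4\mathrm a}^3$ through $\cU_{4\mathrm e}^3$ on four pieces (your ``path and star patterns with slope labels,'' including the all-one-slope collinearity $\cW_{1234}^{\,d/c}$), with the parity terms arising precisely from the DD-type diagonal--diagonal lattice sums you flag. The paper then does exactly what you call ``the remaining work'': it carries out each of those case computations explicitly and sums, verifying the final collection by Mathematica.
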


The lemmas show that the contribution from codimension $\nu$ involves only powers $n^{2q-i}$ for which $i\geq\nu$ and $i$ has the same parity as $\nu$.  We cannot fully explain this parity remark; Theorem II.4.2 does say there is no contribution to the coefficient of $n^{2q-\nu-1}$, but it says nothing about lower powers.  Ehrhart theory says that the leading coefficient is constant; thus a periodic part can only appear at $n^{2q-\nu-2}$.

The proof of these lemmas involve totaling the contributions to $q! u_{\pQ^{hk}}(q;n) = o_{\pQ^{hk}}(q;n)$ in Equation~\eqref{E:iopmu} from all subspaces of codimension $\nu$.  To do that we break down those subspaces into types.  We use notation of the form $\cU_{\kappa}^\nu$ or $\cU_{\kappa\mathrm{a}}^\nu$ to represent a subspace of codimension $\nu$\label{d:codim} in the intersection semilattice $\cL(\cA)$ that involves $\kappa$ pieces, with a letter index to differentiate between distinct types of subspace with these same numbers.  In addition, we wish to differentiate between those subspaces that are indecomposable and those that decompose into subspaces of smaller codimension; for the latter we write an asterisk after the number of pieces and we specify the exact constituent subspaces.  For example, we will have a subspace $\cU_{5^*\mathrm{a}}^3{:}\cU_2^1\cU_{3\mathrm{a}}^2$. 

For each type we determine  the M\"obius function $\mu(\hat0,\cU)$ and count the number of lattice points in the intersection $\cU\cap\ocube$.  To perform this count in type $\cU_{\kappa\mathrm{a}}^\nu$, we count the number of ways to place $\kappa$ attacking pieces in the designated way, and then multiply by $n^{2(q-\kappa)}$ for the number of ways to place the remaining pieces whose positions are not constrained.

\medskip
\noindent
{\em Continuation of proof of Theorem~\ref{P:hvdiag}.}
The subspaces $\cU$ that contribute to $\gamma_2$ are only those of codimension two, because no contributions to $\gamma_2$ come from subspaces of codimension $0$ or $1$.  To find $\gamma_2$ we extract the coefficient of $n^{2q-2}$ from \eqref{E:codim2}.  When calculating $\gamma_3$, there are contributions from the subspaces of codimensions 3 and 1.  
Combining their contributions implies that the coefficients $\gamma_3$ are as in \eqref{eq:gamma2q-3}.

The contribution to $\gamma_4$ from any subspace of codimension 3 is necessarily zero (again by Theorem II.4.2), and by our calculations above the contribution is constant for every subspace of lesser codimension.  Along with the constancy of the leading coefficient, this implies that $\gamma_4$ is constant for all partial queens.

A periodic contribution to $\gamma_5$ can arise only from subspaces of codimensions 1, 2, and 3, and by Lemma~\ref{L:codim012} only from codimension 3.  
The periodic parts of all codimension-3 subspaces are collected in Lemma~\ref{L:codim3}, in which the periodic coefficient of $n^{2q-5}$ is $- (-1)^n (q)_3 h \delta_{k2}/8,$ so that is the periodic part of $q!\gamma_5$.
If we hold $q$ fixed, the counting quasipolynomials for the queen and the trident are the only ones of partial queens that have non-constant coefficient $\gamma_5$, whose period is 2.  
\end{proof}

\sectionpage\section{Two and Three Partial Queens}\label{two-three}

These observations are on display when we use our theory to calculate the counting quasipolynomial $u_{\pQ^{hk}}(3;n)$.  The results agree with formulas proposed by \Kot, who supplemented his formulas for bishops and queens by independently calculating (but, as is his practice, not proving) the other cases in his fifth edition~\cite{ChMath} after we suggested studying partial queens.

Complete formulas for two or three partial queens are in Theorems~\ref{T:2Qhk} and \ref{T:3Qhk}.

\begin{thm}\label{T:2Qhk}
The counting quasipolynomial for two partial queens $\pQ^{hk}$ is
\begin{equation*}
u_{\pQ^{hk}}(2;n) = \frac{1}{2}n^4 -\frac{3h + 2k}{6} n^3 + \frac{h+k-1}{2}n^2 - \frac{k}{6} n.
\end{equation*}
\end{thm}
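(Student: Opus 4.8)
The plan is to specialize the general counting apparatus to the case $q=2$, where only subspaces involving two pieces can contribute and the combinatorics collapses dramatically. Since there are only two pieces, the relevant subspaces in $\cL(\cA_{\pQ^{hk}})$ are exactly the whole space (codimension $0$), the hyperplanes $\cH_{12}^{d/c}$ (codimension $1$), and their common intersection $\cW_{12}^{\,=}$ (codimension $2$), which forces $z_1=z_2$. All subspaces involving three or more pieces are vacuous when $q=2$, so the entire expansion \eqref{E:iopmu} reduces to these three contributions.

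First I would read off the codimension-$0$, $1$, and $2$ contributions directly from Lemma~\ref{L:codim012}, setting $q=2$. From \eqref{E:codim0} the leading term is $\frac{1}{2!}n^4$. From \eqref{E:codim1}, the falling factorial $(q)_2 = (2)_2 = 2$ survives while all higher falling factorials $(q)_3,(q)_4,\dots$ vanish, so only the $\frac{3h+2k}{6}n^3$ and $\frac{k}{6}n^{2q-3}=\frac{k}{6}n$ pieces remain, contributing $-\frac{1}{2}\big\{2\cdot\frac{3h+2k}{6}n^3 + 2\cdot\frac{k}{6}n\big\}$. From \eqref{E:codim2}, again only the $(q)_2$ terms persist: $(2)_4=(2)_3=0$, leaving just $(q)_2\frac{h+k-1}{2}n^{2q-2}=2\cdot\frac{h+k-1}{2}n^2$ from the first bracket, while the $n^{2q-4}$ and $n^{2q-6}$ brackets contribute nothing since their surviving terms all carry factors of $(q)_3$ or $(q)_4$.

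Dividing each contribution by $q!=2$ and collecting, I would obtain
\begin{equation*}
u_{\pQ^{hk}}(2;n) = \frac{1}{2}n^4 - \frac{3h+2k}{6}n^3 + \frac{h+k-1}{2}n^2 - \frac{k}{6}n,
\end{equation*}
which is the asserted formula. The point is that every term with a falling-factorial prefactor of order three or higher automatically drops out at $q=2$, so the computation is essentially a matter of extracting the $(q)_2$-coefficients from the already-proved lemma.

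I do not expect any serious obstacle here, since all the heavy lifting has been done in Lemma~\ref{L:codim012}; the only thing to check carefully is that no codimension-$2$ subspace other than $\cW_{12}^{\,=}$ (type $\cU_2^2$) can involve merely two pieces—types $\cU_{3\mathrm{a}}^2$, $\cU_{3\mathrm{b}}^2$, and $\cU_{4^*}^2$ all require three or four pieces and hence are absent when $q=2$, consistent with their contributions vanishing because of the $(q)_3$ and $(q)_4$ factors. Thus the mild subtlety is purely bookkeeping: confirming that setting $q=2$ in \eqref{E:codim0}--\eqref{E:codim2} correctly captures all and only the nonvacuous subspaces, which it does precisely because the falling factorials encode the piece counts.
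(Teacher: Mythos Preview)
Your proof is correct but takes a different route from the paper's. The paper's proof is a single line: it invokes Theorem~\TutwoP\ from Part~II, a general formula for $u_\pP(2;n)$ valid for any rider, and observes that for a partial queen the move parameters satisfy $\hatd_r=1$ for every basic move (so the residue terms $n\bmod\hatd_r$ vanish), immediately yielding the stated polynomial. You instead stay entirely within Part~III, specializing Lemma~\ref{L:codim012} to $q=2$ and using the vanishing of $(2)_3$ and $(2)_4$ to kill every term arising from subspaces that require three or more pieces. Your approach is more self-contained relative to this paper and makes explicit why only the three subspace types $\hat0$, $\cH_{12}^{d/c}$, and $\cW_{12}^{\,=}$ survive; the paper's approach is shorter and exploits machinery already built in Part~II for arbitrary two-piece counts. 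Both reach the same polynomial by essentially the same underlying M\"obius sum, just packaged differently.
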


\begin{proof}
In Theorem~\TutwoP\ there are $h$ moves with $(\hatc,\hatd)=(0,1)$ and $k$ with $(\hatc,\hatd)=(1,1)$.  So, all $\hatd_r=1$ and $n\mod \hatd_r=0$.
\end{proof}

\begin{thm}\label{T:3Qhk}
The counting quasipolynomial for three partial queens $\pQ^{hk}$ is a polynomial when $k<2$ and has period $2$ when $k=2$.  The formula is 
\begin{equation*}
\begin{aligned}
u_{\pQ^{hk}}(3;n)
&=  
\frac{1}{6} n^6
- \frac{3h+2k}{6} n^5 
\\&\quad
+ \left[ \frac{5h+4hk+4k-3}{6} + \delta_{h2} + \frac{5\delta_{k2}}{12} \right] n^4 
\\&\quad
- \left[ \frac{ (h+k-1)(3h+2k) }{3} + \frac{k}{6} + \frac{2k\delta_{h2}}{3} +  \frac{5h\delta_{k2}}{12} \right] n^3 
\\&\quad
+ \left[ \frac{(h+k-1)^2(h+k+2)}{6} + \frac{(2h+1)k}{6} + \frac{\delta_{k2}}{3} \right] n^2
\\&\quad
- \left[ \frac{k(h+k-1)}{3} + \frac{k\delta_{h2}}{3} + \frac{11h\delta_{k2}}{24} \right] n 
+ \frac{\delta_{k2}}{8} 
\\&\quad
+ (-1)^n \frac{\delta_{k2}}{8} \big( hn - 1 \big).
\end{aligned}
\end{equation*}
\end{thm}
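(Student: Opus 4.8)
The plan is to evaluate the master formula \eqref{E:iopmu} at $q=3$. With only three pieces every attack hyperplane $\cH^{d/c}_{ij}$ has $\{i,j\}\subseteq\{1,2,3\}$, so the intersection lattice $\cL\big(\cA_{\pQ^{hk}}\big)$ is an interval $[\hat0,\hat1]$ whose top element is $\cW_{123}^{\,=}$ (all three pieces on one square), and every subspace has codimension at most $4$. First I would collect the contributions of codimensions $0$ through $3$ directly from Lemmas~\ref{L:codim012} and \ref{L:codim3} by setting $q=3$. The convenient feature of this substitution is that $(q)_4=(q)_5=(q)_6=0$ at $q=3$, so every term carrying one of these falling factorials drops out; this is exactly right, because those terms come from the types $\cU_{4^*}^2$, $\cU_{4\mathrm a}^3$ through $\cU_{4\mathrm e}^3$, $\cU_{4^*}^3$, $\cU_{5^*\mathrm a}^3$, $\cU_{5^*\mathrm b}^3$, and $\cU_{6^*}^3$, all of which require four or more distinct pieces and hence cannot occur. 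What survives are only the genuine three-piece types, whose coefficients are multiples of $(q)_2=(q)_3=6$.

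The one contribution not already packaged in the two lemmas is the unique codimension-$4$ subspace $\cW_{123}^{\,=}$. Its essential part is the two-dimensional set $\{z_1=z_2=z_3\}$, so $\alpha(\cW_{123}^{\,=};n)=n^2$ and the free-placement factor is $n^{2q-2\kappa}=n^{0}=1$; thus it contributes $\mu(\hat0,\cW_{123}^{\,=})\,n^2$ to $o_{\pQ^{hk}}(3;n)$. I would read the M\"obius value off Lemma~\LWmu\ (the same lemma that supplies $\mu(\hat0,\cW_{ij}^{\,=})=h+k-1$ and $\mu(\hat0,\cW_{123}^{\,d/c})=2$); alternatively it is forced by the identity $\sum_{\cU}\mu(\hat0,\cU)=0$ over the whole lattice, using the already-known M\"obius values and multiplicities of all lower subspaces. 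I expect $\mu(\hat0,\cW_{123}^{\,=})=(h+k-1)^2(h+k+2)$, which vanishes when $h+k=1$---correctly, since a single move direction cannot pin three pieces to a common square and there is then no codimension-$4$ subspace at all.

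Finally I would add the four blocks (codimensions $0$--$3$ from the lemmas at $q=3$, plus $\mu(\hat0,\cW_{123}^{\,=})\,n^2$) and divide by $3!=6$; the codimension-$4$ term supplies precisely the summand $\frac{(h+k-1)^2(h+k+2)}{6}\,n^2$ of the stated $n^2$ coefficient. Since $\alpha(\cW_{123}^{\,=};n)=n^2$ is an honest polynomial, the entire $(-1)^n$ part of the answer must come from codimension $2$ (the constant piece of Case DD in Type $\cU_{3\mathrm b}^2$, at order $n^{2q-6}=n^0$) and from codimension $3$ (the $h\delta_{k2}$ piece of Type $\cU_{3\mathrm a}^3$, at order $n^{2q-5}=n^1$); this is why $u_{\pQ^{hk}}(3;n)$ has period $2$ exactly when $k=2$ and is otherwise a polynomial. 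The main obstacle is not conceptual but bookkeeping: once the lemmas are in hand, the only fresh ingredient is the M\"obius function of $\cW_{123}^{\,=}$ with its correct multiplicity, after which the result is a sizeable but routine algebraic simplification of the assembled sum---the step the authors confirm by Mathematica.
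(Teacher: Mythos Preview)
Your proposal is correct and follows essentially the same route as the paper: both proofs recycle the type-by-type contributions computed in the proof of Theorem~\ref{P:hvdiag} for subspaces on at most three pieces, add the single new codimension-$4$ subspace $\cW_{123}^{\,=}$ with $\mu(\hat0,\cW_{123}^{\,=})=(h+k-1)^2(h+k+2)$ from Lemma~\LWmu, and divide by $3!$. Your packaging---substituting $q=3$ into Lemmas~\ref{L:codim012} and \ref{L:codim3} and letting the $(q)_4,(q)_5,(q)_6$ terms vanish---is a slightly slicker way to say the same thing as the paper's explicit list of surviving types $\cU_2^2,\cU_{3\mathrm a}^2,\cU_{3\mathrm b}^2,\cU_{3\mathrm a}^3,\cU_{3\mathrm b}^3$.
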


Tables~\ref{Tb:pqueenspolys2} and \ref{Tb:pqueenspolys3} list the quasipolynomials for the various partial queens.  
Theorem~\ref{T:2Qhk} gives $u_{\pQ^{00}}(2;n)=\binom{n^2}{2}$, $u_{\pQ^{10}}(2;n)=n^2\binom{n^2}{2}$, and $u_{\pQ^{20}}(2;n)=[(n)_2]^2$; 
Theorem~\ref{T:3Qhk} gives $u_{\pQ^{00}}(3;n)=\binom{n^2}{3}$, $u_{\pQ^{10}}(3;n)=n^3\binom{n^3}{3}$, and $u_{\pQ^{20}}(3;n)=[(n)_3]^2$; all as one expects from elementary counting (given that $\pQ^{00}$, the partial queen with no moves, attacks only a piece on the same square).
\begin{table}[ht]
\begin{tabular}{|l|l|l|} \hline
\quad \emph{Name} & $(h,k)$ \vstrut{15pt} &\quad $u_{\pQ^{hk}}(2;n)$ 
\\[4pt] \hline
Semi-rook & $(1,0)$ & $\displaystyle\frac{n^4}{2} - \frac{n^3}{2}$
\vstrut{22pt} 
\\[10 pt]
Rook & $(2,0)$ & $\displaystyle\frac{n^4}{2} - n^3+\frac{n^2}{2}$
\\[10 pt]
Semibishop & $(0,1)$ & $\displaystyle\frac{n^4}{2} - \frac{n^3}{3}- \frac{n}{6}$ 
\\[10 pt]
Anassa & $(1,1)$ & $\displaystyle\frac{n^4}{2} -\frac{5 n^3 }{6}+ \frac{n^2}{2} - \frac{n}{6}$ 
\\[10 pt]
Semiqueen & $(2,1)$ & $\displaystyle\frac{n^4}{2} -\frac{4 n^3 }{3} + n^2 - \frac{n}{6} $ 
\\[10 pt]
Bishop & $(0,2)$ & $\displaystyle\frac{n^4}{2}-\frac{2 n^3}{3}+\frac{n^2}{2}-\frac{n}{3}$  
\\[10 pt]
Trident & $(1,2)$ & $\displaystyle\frac{n^4}{2} -\frac{7 n^3 }{6} + n^2 - \frac{n}{3}$ 
\\[10 pt]
Queen & $(2,2)$ & $\displaystyle\frac{n^4}{2}-\frac{5 n^3}{3}+\frac{3 n^2}{2}-\frac{n}{3}$ 
\\ [9 pt] \hline
\end{tabular}
\bigskip
\caption{The quasipolynomials that count nonattacking configurations of two partial queens.} 
\label{Tb:pqueenspolys2}
\end{table}
\begin{table}[ht]
\begin{tabular}{|l|l|} \hline
$(h,k)$ \vstrut{15pt} &\qquad $u_{\pQ^{hk}}(3;n)$ 
\\[4pt] \hline
$(1,0)$ 
\vstrut{22pt} 
& $\displaystyle\frac{n^6}{6}-\frac{n^5}{2}+\frac{n^4}{3}$
\\[10 pt]
$(2,0)$ & $\displaystyle\frac{n^6}{6}-n^5+\frac{13 n^4}{6}-2 n^3+\frac{2 n^2}{3}$
\\[10 pt]
$(0,1)$ & $\displaystyle\frac{n^6}{6}-\frac{n^5}{3}+\frac{n^4}{6}-\frac{n^3}{6}+\frac{n^2}{6}$ 
\\[10 pt]
$(1,1)$ & $\displaystyle\frac{n^6}{6}-\frac{5 n^5}{6}+\frac{5 n^4}{3}-\frac{11 n^3}{6}+\frac{7 n^2}{6}-\frac{n}{3}$ 
\\[10 pt]
$(2,1)$ & $\displaystyle\frac{n^6}{6}-\frac{4 n^5}{3}+\frac{25 n^4}{6}-\frac{37 n^3}{6}+\frac{25 n^2}{6}-n$ 
\\[10 pt]
$(0,2)$ & $\displaystyle\frac{n^6}{6}-\frac{2 n^5}{3}+\frac{5 n^4}{4}-\frac{5 n^3}{3}+\frac{4 n^2}{3}-\frac{2 n}{3}+\frac{1}{8}-(-1)^n\frac{1}{8}$  
\\[10 pt]
$(1,2)$ & $\displaystyle\frac{n^6}{6}-\frac{7 n^5}{6}+\frac{41 n^4}{12}-\frac{65 n^3}{12}+\frac{14 n^2}{3}-\frac{43 n}{24}+\frac{1}{8}+(-1)^n\Big\{\frac{n}{8}-\frac{1}{8}\Big\}$ 
\\[10 pt]
$(2,2)$ & $\displaystyle\frac{n^6}{6}-\frac{5 n^5}{3}+\frac{79 n^4}{12}-\frac{25 n^3}{2}+11 n^2-\frac{43 n}{12}+\frac{1}{8}+(-1)^n\Big\{\frac{n}{4}-\frac{1}{8}\Big\}$ 
\\ [9 pt] \hline
\end{tabular}
\bigskip
\caption{The quasipolynomials that count nonattacking configurations of three partial queens.} 
\label{Tb:pqueenspolys3}
\end{table}

In all instances, these equations agree with \Kot's conjectures and data.  
(After we suggested partial queens, \Kot\ computed many values of the counting functions and inferred formulas which we employed to correct and verify our theoretical calculations.)

\begin{proof}
The only subspaces that contribute to $o_{\pQ^{hk}}(3;n) = 3! u_{\pQ^{hk}}(3;n)$ are those that involve three pieces or fewer.  
The subspace $\bbR^{2q}$ of codimension 0 contributes $n^{2q}$.  
The contribution from codimension 1 is given in Equation~\eqref{E:codim1}.  
In the proof of Theorem~\ref{P:hvdiag}, we already have calculated the contributions from subspaces of types $\cU_2^2$,  $\cU_{3\mathrm{a}}^2$, $\cU_{3\mathrm{b}}^2$, $\cU_{3\mathrm{a}}^3$, and $\cU_{3\mathrm{b}}^3$.  There is one final type of subspace, involving three pieces.
\medskip

\begin{description}

\item[{\bf Type $\cU_3^4$}\,]  
  The subspace $\cU$ is defined by four move equations on three pieces that specify that the pieces all occupy one position on the board; that is, $\cU = \cW_{ijl}^{\,=}$.  
 
There is one subspace for each of the $\binom{q}{3}$ unordered triples of pieces.  The number of points in a subspace is $n^2$, the size of the board.
 
According to Lemma~\LWmu, $\mu(\hat0,\cU) = (h+k-1)^2(h+k+2),$ which happily gives 0 when $h+k=1$.
 
Consequently, the contribution to $o_{\pQ^{hk}}(q;n)$ is 
$$\binom{q}{3}(h+k-1)^2(h+k+2) n^{2q-4}.$$

\end{description}

Combining all contributions and dividing by $q!=6$ gives the formula of the theorem.
\end{proof}

We can now calculate the number of combinatorial types for two and three partial queens.  

\begin{cor}\label{C:types23}
The number of combinatorial types of nonattacking configuration of $q$ partial queens $Q^{hk}$ is $h+k$ when $q=2$ and when $q=3$ is given by Table~\ref{Tb:types3}.
\end{cor}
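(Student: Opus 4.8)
The plan is to read off the count from the counting quasipolynomials already established, using Ehrhart reciprocity for inside-out polytopes. Recall from Parts~I and II that a nonattacking placement of $q$ \emph{labelled} pieces is a $\tfrac1t$-fractional lattice point of the open inside-out polytope $(\cube,\cA_{\pQ^{hk}})$ with $t=n+1$ that avoids every move hyperplane, and that two such placements should be regarded as combinatorially equivalent exactly when they lie in the same region of the arrangement inside $\cube$. Thus the number of labelled combinatorial types is the number of regions of $(\cube,\cA_{\pQ^{hk}})$, and by the inside-out reciprocity theorem this equals $(-1)^{2q}$ times the value at dilation parameter $t=0$ of the open Ehrhart quasipolynomial $E^\circ(t)$. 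Since $\dim\cube=2q$ is even the sign is $+1$, and since $E^\circ(t)=o_{\pQ^{hk}}(q;t-1)$ the value $t=0$ corresponds to $n=-1$. Hence the number of labelled types is $o_{\pQ^{hk}}(q;-1)$, read from the $n$-odd constituent whenever the quasipolynomial is genuinely periodic (the case $k=2$).

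To pass to \emph{unlabelled} types I would quotient by the action of $S_q$ on regions, which is free. Indeed, a region is a bounded convex open subset of $\cube$ that meets no $\cH^{d/c}_{ij}$, so its barycentre lies off all those hyperplanes; as $S_q$ acts linearly, a nontrivial permutation fixing a region setwise would fix that barycentre, and for any $\sigma$ with $\sigma(i)=j\neq i$ this forces $z_i=z_j$ there, contradicting that the barycentre lies off the diagonal $z_i=z_j$ (which is contained in $\cH^{d/c}_{ij}$). Therefore the number of unlabelled combinatorial types is $o_{\pQ^{hk}}(q;-1)/q!=u_{\pQ^{hk}}(q;-1)$, again via the odd constituent when $k=2$.

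It then remains only to substitute $n=-1$ into the explicit formulas. For $q=2$, Theorem~\ref{T:2Qhk} gives $\tfrac12+\tfrac{3h+2k}{6}+\tfrac{h+k-1}{2}+\tfrac{k}{6}=h+k$, the stated value. For $q=3$, I would substitute $n=-1$ (so $(-1)^n=-1$, and the periodic term $(-1)^n\tfrac{\delta_{k2}}{8}(hn-1)$ contributes $\tfrac{\delta_{k2}}{8}(h+1)$) into Theorem~\ref{T:3Qhk} and simplify each of the nine pairs $(h,k)$ to fill Table~\ref{Tb:types3}; from the resulting entries one observes a~posteriori that the count depends only on the total number of moves $h+k$, confirming the assertion that it is controlled by the number, not the kind, of moves.

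The arithmetic in the final step is routine, so the genuine content lies in the reciprocity identification of the first paragraph together with the freeness argument of the second. The point most deserving of care is the genuinely quasiperiodic case $k=2$: one must verify that the formula ``number of regions $=(-1)^{2q}E^\circ(0)$'' selects the correct constituent of the periodic open Ehrhart quasipolynomial at $t=0$, namely the $n$-odd one, since a parity error would corrupt precisely the bishop- and queen-type entries of Table~\ref{Tb:types3}.
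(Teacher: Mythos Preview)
Your proposal is correct and follows essentially the same route as the paper: evaluate $u_{\pQ^{hk}}(q;n)$ at $n=-1$ (using the odd constituent when $k=2$) and read off the answers. The paper's proof simply cites Theorem~I.5.3 for the identification ``number of combinatorial types $=u_\pP(q;-1)$'' and then substitutes, whereas you have unpacked that theorem by invoking inside-out Ehrhart reciprocity and arguing directly that $S_q$ acts freely on the regions; this is the same mathematics, just with the black box opened.
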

\begin{center}
\begin{table}[ht]
\begin{tabular}{|l||c|c|c|}
\hline
\vstrut{13pt} $h \ \bs\ k$  &\makebox[2em][c]{0}  &\makebox[2em][c]{1}  &\makebox[2em][c]{2}  \\
\hline\hline
\vstrut{12pt}\ 0	&--	&1	&6	\\
\hline
\vstrut{12pt}\ 1	&1	&6	&17	\\
\hline
\vstrut{12pt}\ 2	&6	&17	&36	\\
\hline
\end{tabular}
\bigskip
\caption{The number of combinatorial types of nonattacking configuration for three partial queens.}
\label{Tb:types3}
\end{table}
\end{center}

\begin{proof}
Set $n=-1$ in $u_{Q^{hk}}(q;n)$ and apply Theorem~\Ttypenumber.
\end{proof}

For $q=2$ we get the number of basic moves, in accord with Proposition~\Ptwopiecetypes.  For $q=3$ the number of types depends only on the number of moves, just as when we compared three queens to three nightriders in the end of Section~\types.  The numbers match \cite[Sequence A084990]{OEIS}, whose formula is $s(s^2+3s-1)/3$ with $s:=|\M|$.

\begin{conj}\label{Cj:3piecetypes}
The number of combinatorial configuration types of three pieces is $$|\M|\big(|\M|^2+3|\M|-1\big)/3.$$
\end{conj}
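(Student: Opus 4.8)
The final statement is Conjecture~\ref{Cj:3piecetypes}, which asserts that the number of combinatorial configuration types of three arbitrary rider pieces equals $|\M|\big(|\M|^2+3|\M|-1\big)/3$, generalizing the partial-queen count in Corollary~\ref{C:types23}. Since this is a conjecture, the author offers no proof, so I will instead propose a plausible strategy to settle it. The natural plan is to mimic the method already used in the excerpt for Corollary~\ref{C:types23}: by Theorem~\Ttypenumber, the number of combinatorial types is obtained by evaluating the counting quasipolynomial $u_\pP(3;n)$ at $n=-1$, with appropriate sign. So the first task is to produce the general formula for $u_\pP(3;n)$ for an arbitrary piece $\pP$ with basic move set $\M$, not merely for partial queens, and then substitute $n=-1$.

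The key steps I would carry out, in order, are as follows. First, I would run the codimension decomposition of Equation~\eqref{E:iopmu} exactly as in the proofs of Lemma~\ref{L:codim012}, Lemma~\ref{L:codim3}, and Theorem~\ref{T:3Qhk}, but keeping the move set $\M$ general rather than specializing to $h$ axial and $k$ diagonal slopes. For $q=3$ only subspaces on three or fewer pieces contribute, so the relevant types are precisely $\cU_2^1$, $\cU_2^2$, $\cU_{3\mathrm{a}}^2$, $\cU_{3\mathrm{b}}^2$, $\cU_{3\mathrm{a}}^3$, $\cU_{3\mathrm{b}}^3$, and $\cU_3^4$. Second, for each type I would record the M\"obius function from Lemma~\LWmu\ (which depends only on $|\M|$, since $h+k-1$ becomes $|\M|-1$ and the values $2$, $-2(|\M|-1)$, $(|\M|-1)^2(|\M|+2)$ are expressed purely through $|\M|$) together with the Ehrhart quantities $\alpha^{d/c}$ and $\beta^{d/c}$ from Proposition~\pattackdc. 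Third, I would assemble $u_\pP(3;n)$ as a combination of these contributions and evaluate at $n=-1$ via Theorem~\Ttypenumber, hoping that the move-geometry-dependent pieces (the $\delta$-terms, the Case DD--type corrections, the periodic $(-1)^n$ parts) either vanish or collapse into a function of $|\M|$ alone upon setting $n=-1$.

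The main obstacle, and the reason this is only a conjecture, is that several contributions at $q=3$ genuinely depend on the \emph{geometry} of the move set, not just its cardinality. The counts in Type $\cU_{3\mathrm{b}}^2$ and Type $\cU_{3\mathrm{a}}^3$ (Cases VH, DV, DD, $\triangle_1$, $\triangle_2$) were computed using the specific slopes $0/1,1/0,\pm1/1$ and the specific incidence geometry of collinear triples on those lines. For a general rider the analogous counts involve $\hatc,\hatd$ and, more seriously, the number of ways three pieces form a triangle with prescribed slopes, which encodes configuration-theoretic data about the real plane---exactly the "complete knowledge of configuration theorems" the Introduction warns is intractable. The hard part will therefore be proving that, after substituting $n=-1$, all this slope-dependent information miraculously cancels and the answer reduces to the symmetric cubic $|\M|(|\M|^2+3|\M|-1)/3$. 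I would attack this by arguing at the level of the intersection semilattice directly: count, for each isomorphism type of subspace arrangement on three pieces, its M\"obius contribution at $n=-1$, and show combinatorially that the aggregate depends only on how many slopes $|\M|$ there are and on the universal fact that any two distinct slopes meet in a point while any three generic slopes never force an additional coincidence. Establishing this last genericity claim---that for \emph{every} rider, three distinct move-slopes impose no unexpected collinearities affecting the $n=-1$ evaluation---is precisely the delicate configuration-theoretic input, and it is what would have to be verified (or the conjecture refuted by a piece whose moves admit a special incidence) to turn the conjecture into a theorem.
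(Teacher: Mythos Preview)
Your reading is correct: this statement is labeled a Conjecture in the paper, and the paper offers no proof---it is stated immediately after Corollary~\ref{C:types23} on the basis of the numerical match with \cite[Sequence A084990]{OEIS} and the queens/nightriders comparison in Section~\types. So there is nothing to compare your argument against; you have appropriately declined to claim a proof and instead sketched a strategy.

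That strategy is the natural one the paper itself implicitly suggests: evaluate $u_\pP(3;n)$ at $n=-1$ via Theorem~\Ttypenumber, assembling $u_\pP(3;n)$ from the same subspace types used in Theorem~\ref{T:3Qhk} but with a general move set $\M$. You have also put your finger on exactly the right obstruction, namely that the counts for types $\cU_{3\mathrm{b}}^2$ and $\cU_{3\mathrm{a}}^3$ depend on the specific slopes, so one must show that this dependence disappears upon substituting $n=-1$. Your suggestion to bypass the Ehrhart computations and argue directly at the level of the intersection semilattice---showing that the evaluation at $n=-1$ depends only on the combinatorics of $\cL(\cA_\pP)$ restricted to three pieces, which in turn depends only on $|\M|$---is a sensible line of attack and goes somewhat beyond what the paper says.
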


\sectionpage\section{Volumes and Evaluations}\label{last}

Here are an observation and a related problem suggested by our calculation of partial queen coefficients and similar computations for the nightrider in Part~IV.

\subsection{A quasipolynomial observation}\label{numobs}\

It is striking that with Theorems~\ref{P:hvdiag} and \thmN\ we can know the period and periodic part of a quasipolynomial coefficient without knowing anything about the rest of the coefficient.

\subsection{A problem of volumes}\label{vols}

It should be possible to find the volume of $\cU\cap[0,1]^{2q}$ without the trouble of finding its complete Ehrhart quasipolynomial.  Doing so would provide the leading term of $\alpha(\cU;n)$ and thereby the exact contribution of $\cU$ to $\gamma_{\codim\cU}$.  This would be helpful for all pieces, not only partial queens.

The advantage would be that, if $\alpha(\cU;n)$ were known for all subspaces of lesser codimension than $i$ and if $\vol(\cU\cap[0,1]^{2q})$ were known for all subspaces of codimension $i$, then $\gamma_i$ would be completely known.  
Thus we could complete the evaluations of $\gamma_5$ and $\gamma_6$ in Theorem~\ref{P:hvdiag} and of $\gamma_3$ for nightriders in Part~V.

\sectionpage\section{The Missing Proofs}\label{proofs}

We present the proofs of Lemmas~\ref{L:codim012} and \ref{L:codim3}.

\subsection{Proof of Lemma~\ref{L:codim012} on codimension up to $2$}\

\begin{proof}
The case $\nu=0$ is from $\alpha(\ocube;n) = n^{2q}$.  The case $\nu=1$ is that of hyperplanes:

\begin{description}
\item[{\bf Type $\cU_2^1$}\,]
The hyperplanes contribute 
$$
-\binom{q}{2}\sum_{(c,d)\in\M} \alpha^{d/c}(n) \cdot n^{2q-4} = -\binom{q}{2} \Big[ \frac{3h+2k}{3}n^{2q-1} + \frac{k}{3}n^{2q-3} \Big]
$$ 
to $o_{\pQ^{hk}}(q;n)$ since we choose an unordered pair of pieces and a single slope, and the M\"obius function is $-1$, and the number of ways to place the two pieces is $\alpha^{d/c}(n)$, given in Equation~\eqref{E:attacklines}.

\end{description}
\medskip

It remains to solve $\nu=2$.  
We break the subspaces down into four types.  

\begin{description}
\item[{\bf Type $\cU_2^2$}\,]  
The subspace $\cU$ is defined by two move equations involving the same two pieces, $\cU=\cH^{d/c}_{ij} \cap \cH^{d'/c'}_{ij}$ where $d/c\ne d'/c'$ and $i<j$.  Thus, $\cU = \cW_{ij}^{\,=}$,\label{d:W=} the subspace corresponding to the equation $z_i=z_j$, i.e., to two pieces in the same location.

There is one such subspace for each of the $\binom{q}{2}$ unordered pairs of pieces.
There are $n^2$ ways to place the two attacking pieces in $\cU$.
The M\"obius function is $\mu(\hat0,\cU) = h+k-1$, by Lemma~\LWmu.  

The total contribution to $o_{\pQ^{hk}}(q;n)$ is 
$$\binom{q}{2}(h+k-1)n^{2q-2}.$$

\item[{\bf Type $\cU_{3\mathrm{a}}^2$}\,]  
The subspace $\cU$ is defined by two move equations of the same slope involving three pieces, say $\cU=\cH^{d/c}_{12} \cap \cH^{d/c}_{23}$.  This subspace is $\cW_{123}^{\,d/c} = \cH^{d/c}_{12} \cap \cH^{d/c}_{13} \cap \cH^{d/c}_{23}$.  There is one such subspace for each of the $\binom{q}{3}$ unordered triples of pieces.  The number of ways to place the three pieces is $\beta^{d/c}(n)$ in Equation~\eqref{E:attacklines}.  Summing over $(c,d)\in\M$ gives $\big[h+\frac{1}{2}k\big]n^4+\frac{1}{2}kn^2.$

The M\"obius function is $\mu(\hat0,\cU)  = 2$ by Lemma~\LWmu.
The total contribution of this type is 
$$
\binom{q}{3}\big\{(2h+k)n^{2q-2}+kn^{2q-4}\big\}.
$$

\item[{\bf Type $\cU_{3\mathrm{b}}^2$}\,]  
The subspace $\cU$ is defined by two move equations of different slopes involving three pieces, say $\cU=\cH^{d/c}_{12} \cap \cH^{d'/c'}_{23}$.

First, we count the number of ways in which we can place three pieces ($\pP_1$, $\pP_2$, and $\pP_3$) so that $\pP_1$ and $\pP_2$ are on a line of slope $d/c$ and $\pP_2$ and $\pP_3$ are on a line of slope $d'/c'$.
Depending on $d/c$ and $d'/c'$, we have the following numbers of choices for the placements of the chosen pieces in the given attacking configuration:

{\bf Case VH.}
If $\{d/c,d'/c'\}=\{0/1,1/0\}$, we have $n^2$ choices for $\pP_2$; then we place $\pP_1$ in one of $n$ positions in the same column as $\pP_2$ and place $\pP_3$ in one of $n$ positions in the same row as $\pP_2$.  This gives a total of $n^4$ placements of the three pieces.  This case contributes only when $h=2$.

{\bf Case DV.}
If one slope is diagonal and the other vertical or horizontal, we first choose the positions of $\pP_1$ and $\pP_2$, which we specify are attacking each other diagonally.  This can be done in $\alpha^{1/1}(n)$ ways.  Then we place $\pP_3$ in line with $\pP_2$ in $n$ ways.  This gives a total of $\frac{2}{3}n^4+\frac{1}{3}n^2$ placements of the three pieces, contributing $hk$ times.

{\bf Case DD.}
If $\{d/c,d'/c'\}=\{1/1,-1/1\}$, then the number of possibilities for placing $\pP_1$ on the diagonal of slope $+1$ and $\pP_3$ on the diagonal of slope $-1$ depends on the position $(x,y)$ where we place $\pP_2$.  Consider the positions $(x,y)$ satisfying $x\geq y$ and $x+y \leq n$; if we rotate this triangle of positions about the center of the square, we see that there are four points with the same number of possibilities for each position in the triangle, except when $n$ is odd, in which case we must consider the position $(\frac{n+1}{2},\frac{n+1}{2})$ independently.  (See Figure~\ref{fig:square}.)

\begin{figure}[hbt]
\includegraphics[height=1.5in]{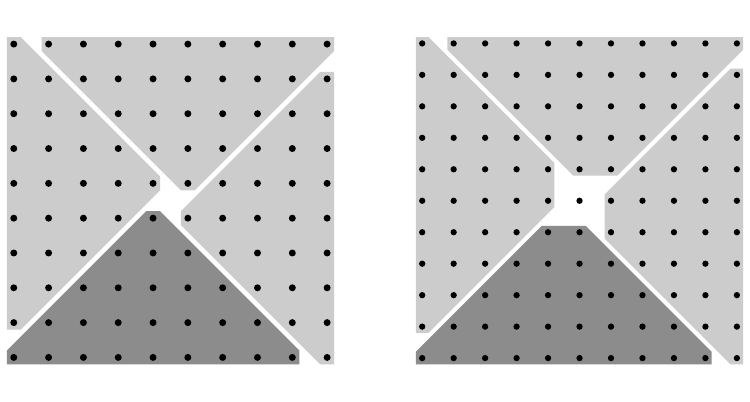}
\caption{The triangle of positions that we consider in Case DD, along with its rotations.  The left figure shows that all positions are covered when $n$ is even; the right figure shows that position $(\frac{n+1}{2},\frac{n+1}{2})$ is considered independently.}
\label{fig:square}
\end{figure}

For a position $(x,y)$ of $\pP_2$ in this triangle, the number of choices for $\pP_1$ is $n-x+y$ and the number of choices for $\pP_3$ is $x+y-1$.   This gives the following number of placements in Case DD:
\pagebreak[2]
\begin{align*}
& \begin{cases} 
\displaystyle 4\sum_{y=1}^{n/2}\sum_{x=y}^{n-y} (n-x+y)(x+y-1) & \text{if $n$ is even,} \\
\displaystyle n^2+4\sum_{y=1}^{(n-1)/2}\sum_{x=y}^{n-y} (n-x+y)(x+y-1) & \text{if $n$ is odd,} 
\end{cases} \\
&= \begin{cases} 
\frac{5}{12}n^4+\frac{1}{3}n^2 & \text{if $n$ is even,} \\[6pt]
\frac{5}{12}n^4+\frac{1}{3}n^2+\frac{1}{4} & \text{if $n$ is odd,} 
\end{cases} \\[6pt]
&= \Big[ \frac{5}{12}n^4+\frac{1}{3}n^2+\frac{1}{8} \Big] - (-1)^n\frac{1}{8}.
\end{align*}
This quantity contributes only when $k=2$. 

In Type $\cU_{3\mathrm{b}}^2$, $\mu(\hat0,\cU)  = 1$.  There are $(q)_3$ ways to choose the three pieces.  The total contribution to $o_{\pQ^{hk}}(q;n)$ depends on $h$ and $k$; it is
\begin{align*}
\qquad\ 
&(q)_3 \bigg\{ \Big[ \delta_{h2} + \frac23 hk + \frac{5}{12} \delta_{k2} \Big] n^{2q-2} + \Big[ \frac13 hk + \frac13 \delta_{k2} \Big] n^{2q-4} + \frac18 \delta_{k2}n^{2q-6} - (-1)^n \frac18 \delta_{k2} n^{2q-6} \bigg\} . 
\end{align*}
\medskip

\item[{\bf Type $\cU_{4^*}^2{:}\cU_2^1\cU_2^1$}\,]   
The subspace $\cU$ is defined by two move equations involving four distinct pieces.  Hence, $\cU = \cH^{d/c}_{12} \cap \cH^{d'/c'}_{34}$, which decomposes into the two hyperplanes $\cH^{d/c}_{12}$ and $\cH^{d'/c'}_{34}$, where $d'/c'$ may equal $d/c$.  
The M\"obius function is $\mu(\hat0,\cU) = 1$.  

There are $2!\binom{q}{2,2,q-4} = (q)_4/4$ ways to choose an ordered pair of unordered pairs of pieces.  Assign any slope $d/c$ to the first pair and $d'/c'$ to the second.  Each pair of slopes, distinct or equal, appears twice, once for each ordering of the unordered pairs, so we divide by 2.  
The number of attacking configurations in each case is $\alpha^{d/c}(n) \cdot \alpha^{d'/c'}(n)$.   
The total contribution of all cases (before multiplication by $n^{2q-8}$) is
\begin{align*}
\qquad
\frac{(q)_4}{8} \sum_{(c,d),(c',d')\in\M} \alpha^{d/c}(n)\cdot\alpha^{d'/c'}(n) 
= \frac{(q)_4}{8} \bigg[ \sum_{(c,d)\in\M} \alpha^{d/c}(n) \bigg]^2 
= \frac{(q)_4}{8} \bigg[ \frac{3h+2k}{3}n^3 + \frac{k}{3}n \bigg]^2 .
\end{align*}
Thus, the contribution of Type $\cU_{4^*}^2$ to $o_{\pQ^{hk}}(q;n)$, after multiplication by the $n^{2q-8}$ ways to place the remaining pieces, is 
\[
\frac{1}{8}(q)_4 \bigg\{\Big[h^2+\frac{4}{3}hk+\frac{4}{9}k^2\Big]n^{2q-2}+\Big[\frac{2}{3}hk+\frac{4}{9}k^2\Big]n^{2q-4}+\frac{1}{9}k^2n^{2q-6}\bigg\}.
\]

\end{description}

Adding up the various types gives the total contribution to $o_{\pQ^{hk}}(q;n)$; dividing by $q!$ concludes the proof of Lemma~\ref{L:codim012}.
\end{proof}

\subsection{Proof of Lemma~\ref{L:codim3} on codimension $3$}\

\begin{proof}
The subspaces $\cU$ defined by three move equations may involve three, four, five, or six pieces.  We treat each number of pieces in turn.

\begin{description}
\item[{\bf Type $\cU_{3\mathrm{a}}^3$}\,]  
The subspace $\cU$ is defined by three move equations of distinct slopes involving the same three pieces, say $\cU=\cH^{d/c}_{12} \cap \cH^{d'/c'}_{13} \cap \cH^{d''/c''}_{23}$ where $d/c$, $d'/c'$, and $d''/c''$ are distinct. 
\par There is one subspace $\cU$ for every valid choice of three slopes and each of the $(q)_3$ ways to choose three pieces and assign pairs of them to the three slopes.
\par As exhibited in Figure~\ref{fig:tri}, there are two kinds of subspace $\cU$, with the hypotenuse of the right triangle either on a diagonal (Case $\triangle_1$) or on a vertical or horizontal line (Case $\triangle_2$).  

\begin{figure}
\includegraphics[height=1.2in]{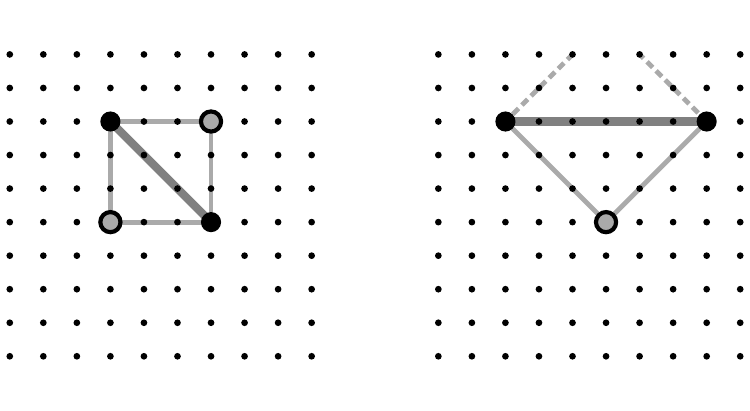}
\caption{Given two attacking queens on the hypotenuse of a right triangle, there may be one or two locations for a third mutually attacking queen, as explained in Type $\cU_{3\mathrm{a}}^3$.}
\label{fig:tri}
\end{figure}

{\bf Case $\triangle_1$.}  
We take $d/c$ to be diagonal with slope $-1/1$ and we take $d'/c'=0/1$ and $d''/c''=1/0$.  That is the upper triangle in Figure~\ref{fig:tri}(left) if $\pP_1$ is higher than $\pP_2$ and the lower one if lower; when $\pP_1$ and $\pP_2$ coincide, the triangle degenerates to a point and $\pP_3$ coincides with the other pieces.

Once we have chosen the positions of $\pP_1$ and $\pP_2$ on a diagonal, $\pP_3$ is determined.  The number of such configurations is $\alpha^{1/1}(n) = \frac{2n^3+n}{3} = \alpha(\cU;n)$.  This is then multiplied by $k$ for the $k$ diagonal slopes $d/c$ and $\delta_{h2}$ because this case exists only when $h=2$.

{\bf Case $\triangle_2$.}  
There are $h$ horizontal and vertical moves, so $h$ orientations for the hypotenuse.  We take the case of a horizontal hypotenuse, $d/c=0/1$, and $d'/c'=1/1$, $d''/c''=-1/1$; then we multiply the count by $h$.  $\pP_3$ is in the upper triangle of Figure~\ref{fig:tri}(right) if $\pP_1$ is left of $\pP_2$ and the lower one if right; when $\pP_1$ and $\pP_2$ coincide, the triangle is degenerate and $\pP_3$ coincides with them.

First we choose the vertical coordinate $y$ of the hypotenuse.  Form the diamond composed of two triangles with vertices $(1,y), (n,y)$, and $(\frac{n+1}{2},y+\frac{n}{2})$ (the upper triangle) or $(\frac{n+1}{2},y-\frac{n}{2})$ (the lower triangle).  $\pP_3$ may have any (integral) location in these triangles that is in the board $[1,n]^2$, and once it is positioned the locations of $\pP_1$ and $\pP_2$ are determined.  Thus, we need only count the valid locations for $\pP_3$ for each height $y$.  We do so by counting the integral points in the diamond and subtracting those outside the board.

The number of integral points in one triangle with hypotenuse of $n$ points is 
$
T(n) := \frac14(n^2+2n+\epsilon), 
$
where $\epsilon := \frac12 [1-(-1)^n] = (n \mod 2)$.\label{d:ep} The number in the diamond is $T(n)+T(n-2) = \frac{1}{2}(n^2+\epsilon)$.  There are $n$ such diamonds.

The number of diamond points outside the board depends on $y$.  For $y = \frac{n+1}{2}$ (the midline, which exists when $n$ is odd), there are no such points.  Thus, we total the count for all $y<n/2$ and double it.  The excluded part of the diamond is a triangle whose upper edge extends from $(y+1,0)$ to $(n-y,0)$, with $n-2y$ points, so the number of excluded points is $T(n-2y) = \frac14[(n-2y+1)^2-1+\epsilon]$.  Summing over $y$ and doubling to include $y>(n+1)/2$ gives 
$
2 \sum_{y=1}^{(n-\epsilon)/2} T(n-2y) = \frac{1}{12}(n^3-4n+3n\epsilon).
$
This is subtracted from the total of the $n$ diamond areas and the result multiplied by $h$, giving 
\begin{align*}
&h \bigg\{ n \frac{n^2+\epsilon}{2} - \frac{n^3-4n+3n\epsilon}{12} \bigg\} = \frac{5h}{12}n^3+\frac{11h}{24}n-(-1)^n\frac{h}{8}n
\end{align*}
as the number of configurations.  This case applies only when $k=2$.

In both cases of type $\cU_{3\mathrm{a}}^3$, $\mu(\hat0,\cU)  =-1$ because the number of hyperplanes that contain $\cU$ is $3 = \codim\cU$.  In each case we multiply by $(q)_3$ for the number of subspaces and $n^{2q-6}$ for the $q-3$ other pieces.  
The total contribution of this type to $o_{\pQ^{hk}}(q;n)$ is 
\begin{align*}
\qquad\quad
&-(q)_3 \bigg\{ \Big[\delta_{h2}\frac{2k}{3} + \delta_{k2}\frac{5h}{12}\Big] n^{2q-3}
 + \Big[\delta_{h2}\frac{k}{3} + \delta_{k2}\frac{11h}{24}\Big] n^{2q-5} - (-1)^n \delta_{k2}\frac{h}{8}n^{2q-5} \bigg\}.
\end{align*}
%

\medskip
\item[{\bf Type $\cU_{3\mathrm{b}}^3$}\,]  
The subspace $\cU$ is defined by three move equations involving three pieces and two or three slopes, of the form $\cU=\cH^{d/c}_{12} \cap \cH^{d'/c'}_{12} \cap \cH^{d''/c''}_{23}$ where $d''/c''$ is any chosen slope, and $d/c, d'/c'$ are arbitrary distinct slopes.  This subspace equals $\cW_{12}^{\,=}\cap\cW_{123}^{d''/c''}$; thus, it does not depend on the choice of $d/c$ and $d'/c'$, and $\cH^{d''/c''}_{23}$ can be replaced by $\cH^{d''/c''}_{13}$ in the definition of $\cU$.  Moreover, the number of ways to place the three pieces equals the number of ways to place an ordered pair of pieces in a line of slope $d''/c''$, i.e., $\alpha^{d''/c''}(n)$ from Equation~\eqref{E:attacklines}.  This should be multiplied by $n^{2q-6}$ for the remaining $q-3$ pieces.

By Lemma~\LWmu\ the M\"obius function is $\mu(\hat0,\cU)=-2(h+k-1)$.  We can specify the pieces involved in $(q)_3/2$ ways.  The contribution to $o_{\pQ^{hk}}(q;n)$ is therefore
\[
-(q)_3(h+k-1)\bigg\{\Big[h+\frac{2k}{3}\Big]n^{2q-3}+\frac{k}{3}n^{2q-5}\bigg\}.
\]

\item[{\bf Type $\cU_{4\mathrm{a}}^3$}\,]   
The subspace $\cU$ is defined by three move equations of the same slope involving four pieces, say $\cU = \cW_{1234}^{\,d/c} =$ (for instance) $\cH^{d/c}_{12} \cap \cH^{d/c}_{23} \cap \cH^{d/c}_{34}$.  There are $\binom{q}{4}$ ways to choose the four pieces.

The number of ways to place four attacking pieces in $\cU$ is $\sum_{l\in\bL^{d/c}(n)} l^4$\label{d:linesizes} (see Section~\oneortwo), which depends on $d/c$.
When $d/c\in\{0/1,1/0\}$, the number is $\sum_{l\in\bL^{d/c}(n)}l^4=n^5$.  
When $d/c\in\{1/1,-1/1\}$, the number is $\sum_{l=1}^n l^4 + \sum_{l=1}^{n-1} l^4 = \frac{1}{15}(6n^5+10n^3-n)$. 

We have $\mu(\hat0,\cU)  = -6$ because $\cU$ is contained in six hyperplanes $\cH^{d/c}_{ij}$, four codimension-2 subspaces of type $\cU_{3\mathrm{a}}^2$, and three codimension-2 subspaces of type $\cU_{4^*}^2$.  The total contribution to $o_{\pQ^{hk}}(q;n)$ is 
\[ 
-\binom{q}{4}\bigg\{\Big[6h+\frac{12k}{5}\Big]n^{2q-3}+4kn^{2q-5}-\frac{2k}{5}n^{2q-7}\bigg\}.
\]

\item[{\bf Type $\cU_{4\mathrm{b}}^3$}\,]  
The subspace $\cU$ is defined by three move equations involving four pieces, two of the equations having the same slope and involving the same piece: say $\cU = \cW_{123}^{\,d/c} \cap \cH^{d'/c'}_{34} =$ (for example) $ \cH^{d/c}_{12} \cap \cH^{d/c}_{23} \cap \cH^{d'/c'}_{34}$, where $d'/c' \neq d/c$. 

There is a subspace for each of $(q)_4/2!$ choices of pieces (since $\pP_1$ and $\pP_2$ are unordered) and for each ordered pair of slopes $d/c$ and $d'/c'$.

Just as with subspaces of type $\cU_{3\mathrm{b}}^2$, we have three cases.

\medskip
{\bf Case VH.}  Take $d/c=1/0$ and $d'/c'=0/1$.  Choosing $\pP_3$'s position in $n^2$ ways, place $\pP_1$ and $\pP_2$ in the same column in $n^2$ ways, and place $\pP_4$ in $\pP_3$'s row in $n$ ways.  Multiply by two for interchanging slopes, for a total of $2n^5$ placements when $h=2$.

\medskip
{\bf Case DV.}  Take $d/c=1/1$ (diagonal) and $d'/c'=1/0$ (vertical).  As in Type $\cU^2_{3\mathrm{a}}$, the number of ways to place $\pP_1$, $\pP_2$, and $\pP_3$ in the same diagonal is given by Equation~\eqref{E:attacklines}; multiply by the $n$ ways to place $\pP_4$ in the same column as $\pP_3$.  Considering the choice of diagonal and that of column or row, we get $\frac{1}{2}hk(n^5+n^3)$.  

Or, take $d/c=1/0$ and $d'/c'=1/1$.  Place $\pP_3$ and $\pP_4$ in the same diagonal in $\alpha^{1/1}(n)$ ways and multiply by $n^2$ placements of $\pP_1$ and $\pP_2$ in $\pP_3$'s column; we get $\frac{1}{3}hk(2n^5+n^3)$.  

The total is 
$hk(\frac{7}{6}n^5+\frac{5}{6}n^3).$

\medskip
{\bf Case DD.}  
This case exists only when $k=2$.  
Here $\{d/c,d'/c'\}=\{1/1,-1/1\}$.  We reduce the computation by symmetry, as in Type $\cU_{3\mathrm{b}}^2$, but here the symmetry in Figure~\ref{fig:square} is broken by having two pieces in one of the diagonals.  Thus, we count the placements where $\pP_3$ is on one of the two main diagonals separately from the other placements.  See Figure~\ref{fig:square3} for a visual representation.

For $\pP_3$ at a point $(x,y)$ in the bottom triangle $y+1 \leq x \leq n-y$, the number of placements with $\pP_1$ and $\pP_2$ on the diagonal of slope $+1$ and $\pP_4$ on the diagonal of slope $-1$ through $(x,y)$ equals the number with $\pP_1$ and $\pP_2$ on the diagonal of slope $-1$ and $\pP_4$ on the diagonal of slope $+1$ through $(n+1-x,y)$, which is also in the bottom triangle. Therefore, if we double the number of the former kind we get the total number with $\pP_3$ in the bottom triangle.  (Note that we are combining the counts of two different [but isomorphic] subspaces $\cU$.  In particular, the configuration with all pieces in the same place is counted twice, but only once for each subspace.)  
Multiplying this by 4 for the four triangles, we have the number of configurations where $\pP_3$ is off the two main diagonals.
To get the actual number note that when $(x,y)$ is in the bottom triangle, its positive diagonal has $n-x-y$ points and its negative diagonal has $x+y-1$ points.

Similarly, if we count the configurations with $\pP_3$ in the lower left or lower right half-diagonal and $\pP_1, \pP_2$ on the diagonal with positive slope, double the result.  We double this again to account for the upper half-diagonals.

When $n$ is odd, the center point contributes $n^3$ for each choice of the diagonal of $\pP_1,\pP_2$.

Thus we have the following number of placements in Case DD:
\begin{align*}\qquad
&\begin{cases} 
\begin{aligned}
 &8\sum_{y=1}^{n/2}\sum_{x=y+1}^{n-y} (n-x+y)^2(x+y-1) \\&\qquad+4\sum_{y=1}^{n/2}[n(2y-1)^2+n^2(2y-1)] 
\end{aligned}
& \text{if $n$ is even,} \\
\begin{aligned}
 &8\sum_{y=1}^{(n-1)/2}\sum_{x=y+1}^{n-y} (n-x+y)^2(x+y-1) \\&\qquad+4\sum_{y=1}^{(n-1)/2}[n(2y-1)^2+n^2(2y-1)]+2n^3 
\end{aligned}
& \text{if $n$ is odd,} 
\end{cases} \\[3pt]
&=\begin{cases} 
\frac{3}{5} n^5 + \frac{2}{3} n^3 - \frac{4}{15} n & \text{if $n$ is even,} \\[3pt]
\frac{3}{5} n^5 + \frac{2}{3} n^3 + \frac{11}{15} n & \text{if $n$ is odd,} 
\end{cases} \\[3pt]
&\quad=\left[\frac{3}{5} n^5 + \frac{2}{3} n^3 + \frac{7}{30} n\right] - (-1)^n \frac{1}{2} n .
\end{align*}

\begin{figure}[hbt]
\includegraphics[height=1.5in]{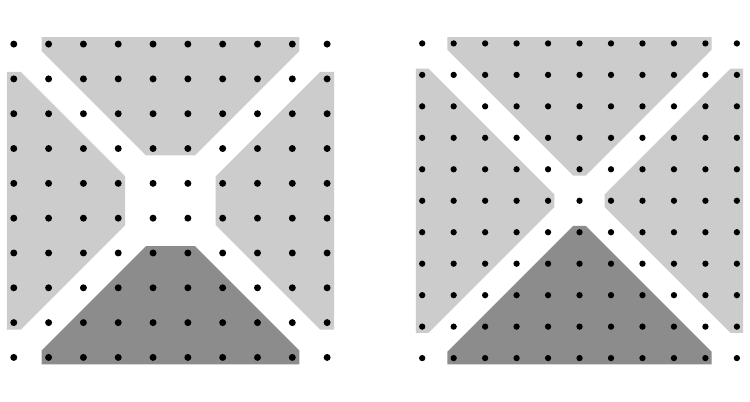}
\caption{The triangle of positions that we consider in Case DD of Type $\cU_{4\mathrm{b}}^3$, for even $n$ (left) and odd $n$ (right).}
\label{fig:square3}
\end{figure}

\medskip
In all cases of Type $\cU_{3\mathrm{b}}^3$, $\mu(\hat0,\cU) = -2$ because $\cU$ is contained in four hyperplanes, $\cH_{ij}^{d/c}$ with $i,j\in\{1,2,3\}$ and $\cH_{34}^{d'/c'}$, and the four subspaces $\cW_{123}^{\,d/c}$ and $\cH_{ij}^{d/c} \cap \cH_{34}^{d'/c'}$ of codimension 2.

Therefore, the total contribution to $o_{\pQ^{hk}}(q;n)$ from Type $\cU_{4\mathrm{b}}^3$ is
\begin{align*}
\qquad\quad 
-(q)_4 &\left\{ \Big[2\delta_{h2}+\frac{3}{5}\delta_{k2}+\frac{7}{6}hk\Big]n^{2q-3} + \Big[\frac23\delta_{k2}+\frac56hk\Big]n^{2q-5} +\frac{7}{30}\delta_{k2}n^{2q-7}  \right. \\
&\quad - \left. (-1)^n \frac{1}{2}\delta_{k2}n^{2q-7} \right\}.
\end{align*}
%

\medskip
\item[{\bf Type $\cU_{4\mathrm{c}}^3$}\,]  
The subspace $\cU$ is defined by three move equations, two having the same slope but not involving the same piece, say $\cU=\cH^{d/c}_{12} \cap \cH^{d'/c'}_{23} \cap \cH^{d/c}_{34}$. 
There are $(q)_4/2$ choices for $\pP_1$ through $\pP_4$ because of the symmetry.
\par We have the following cases (see Figure~\ref{fig:34c}):
\begin{figure}[hbt]
\includegraphics[height=1.2in]{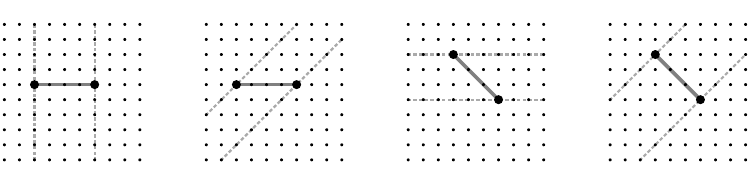}
\caption{The possible attacking configurations in Type $\cU_{4\mathrm{c}}^3$.  From left to right are cases VHV, DHD, HDH, and DDD.}
\label{fig:34c}
\end{figure}
%

\medskip
{\bf Case VHV.}
If $\{d/c,d'/c'\}=\{0/1,1/0\}$, we can choose the pieces and positions for $\pP_2$ and $\pP_3$ in a row in $n^3$ ways, and then place $\pP_1$ in $\pP_2$'s column and $\pP_4$ in $\pP_3$'s column in $n^2$ ways.  With two possible orientations (VHV or HVH), the number of attacking configurations is $2n^5$ when $h=2$. 

\medskip
{\bf Case DHD.}
We consider the case where the outer attacking move is diagonal and the inner attacking move is horizontal or vertical.  Without loss of generality, suppose $d/c=+1/1$ and $d'/c'=0/1$.   We investigate the possibilities for $\pP_1$ and $\pP_4$ based on choosing the row for $\pP_2$ and $\pP_3$.

Suppose that $\pP_2$ and $\pP_3$ are in row $y$, where $1\leq y \leq n$.  The positions that do not diagonally attack a position in row $y$ are those in two right triangles, one in the upper left and the other in the lower right, with legs having, respectively, $n-y$ and $y-1$ points.  Placing $\pP_1$ and $\pP_4$ in any attacking positions determines where $\pP_2$ and $\pP_3$ are.  Thus, the number of configurations is
$\sum_{y=1}^n \big[n^2 - \binom{n-y+1}{2} - \binom{y}{2}\big]^2 = \frac{9}{20}n^5+\frac{5}{12}n^3+\frac{2}{15}n,$
which contributes $hk$ times.

\medskip
{\bf Case HDH.}
When the inner attacking move is diagonal and the outer attacking move is horizontal or vertical, we first choose the positions of $\pP_2$ and $\pP_3$, in one of $\alpha^{1/1}(n)$ ways.  There are $n^2$ ways to place $\pP_1$ in relation to $\pP_2$ and $\pP_4$ in relation to $\pP_3$, giving a total contribution of $hk(\frac{2}{3}n^5+\frac{1}{3}n^3)$.

\medskip
{\bf Case DDD.} 
Here $\{d/c,d'/c'\}=\{1/1,-1/1\}$; say $d/c=1/1$.  
We first determine the number of positions diagonally attacking a piece placed in a diagonal $D_y$ of slope $-1$ passing through $(1,y)$ for a fixed $y \in [2n-1]$.  As $y$ varies, the multiset of the number of positions attacking the positions on it along each opposite diagonal has the following pattern:
\begin{align*}
D_1,D_{2n-1}\text{: } &\{n\}, \\
D_2,D_{2n-2}\text{: } &\{n-1,n-1\}, \\
D_3,D_{2n-3}\text{: } &\{n-2,n,n-2\}, \\
D_4,D_{2n-4}\text{: } &\{n-3,n-1,n-1,n-3\}, \\
	\ldots\qquad &\ \ldots, \\
D_{n-1},D_{n+1}\text{: } &\begin{cases}
	\{3,\ldots,n-1,n-1,\ldots,3\}	&\qquad\text{ if $n$ is even},\\
	\{3,\ldots,n-2,n,n-2,\ldots,3\}	&\qquad\text{ if $n$ is odd},
	\end{cases}\\
D_n\text{: } 	&\begin{cases}
	\{1,3,\ldots,n-1,n-1,\ldots,3,1\}	&\text{ if $n$ is even},\\
	\{1,3,\ldots,n-2,n,n-2,\ldots,3,1\}	&\text{ if $n$ is odd}.
	\end{cases}
\end{align*}
Then $\pP_1$ and $\pP_4$ can each be placed arbitrarily and independently in any of the opposite diagonals that attack $D_y$.  The choice of the opposite diagonal determines the locations of $\pP_2$ and $\pP_3$, respectively.  Given $y$, the number of placements of $\pP_1$ and $\pP_4$ is the square of the sum of all lengths in $D_y$; thus, the total number of ways to place the four pieces is
$$
\qquad\quad
\begin{cases} 
\begin{aligned}
&\textstyle 2\sum _{j=0}^{(n/2)-1} \big[n+2 \sum_{i=1}^j (n-2 i)\big]^2 + 2\sum_{j=0}^{(n/2)-2} \big[2 \sum_{i=0}^j (n-2 i-1)\big]^2 \\[3pt]
&\qquad\textstyle + \big[2 \sum_{i=0}^{(n/2)-1} (n-2 i-1)\big]^2
\end{aligned}
  & \text{if $n$ is even,} \\[20pt]
\begin{aligned}
&\textstyle 2\sum_{j=0}^{(n-3)/2} \big[n+2 \sum_{i=1}^j (n-2 i)\big]^2 + 2\sum_{j=0}^{(n-3)/2} \big[2 \sum_{i=0}^j (n-2 i-1)\big]^2 \\[3pt]
&\qquad\textstyle + \big[n+2 \sum _{i=1}^{(n-1)/2} (n-2 i)\big]^2
\end{aligned}
  & \text{if $n$ is odd,} 
\end{cases}
$$
which simplifies for both parities to $\frac{4}{15}n^5+\frac{1}{3}n^3+\frac{2}{5}n$.  
We double this quantity for the second subspace resulting from choosing slope $d/c=-1$.  
The result is $\frac{8}{15}n^5+\frac{2}{3}n^3+\frac{4}{5}n,$ valid when $k=2$.

In this type, once again, $\mu(\hat0,\cU)  =-1$.  The total contribution to $o_{\pQ^{hk}}(q;n)$ is 
\begin{align*}
\qquad\quad
-(q)_4 \left\{ \Big[ \frac{67}{120}hk+\delta_{h2}+\frac{4}{15}\delta_{k2} \Big]n^{2q-3} 
+ \Big[ \frac{3}{8}hk+\frac{1}{3}\delta_{k2} \Big]n^{2q-5} 
+ \Big[ \frac{1}{15}hk+\frac{2}{5}\delta_{k2} \Big]n^{2q-7} \right\}.
\end{align*}
%

\medskip
\item[{\bf Type $\cU_{4\mathrm{d}}^3$}\,]  
The subspace $\cU$ is defined by three move equations having distinct slopes,  say $\cU=\cH^{d/c}_{12} \cap \cH^{d'/c'}_{23} \cap \cH^{d''/c''}_{34}$.  
The arguments here are similar to those for Type $\cU_{4\mathrm{c}}^3$; however, because of the lack of symmetry, there are now $(q)_4$ choices for the pieces $\pP_1$ through $\pP_4$, provided we fix $d/c$ and $d''/c''$.
\par We place pieces $\pP_2$ and $\pP_3$ first, and then pieces $\pP_1$ and $\pP_4$. 
\par Figure~\ref{fig:34d} shows the four cases we consider.
\begin{figure}[hbt]
\includegraphics[height=1.2in]{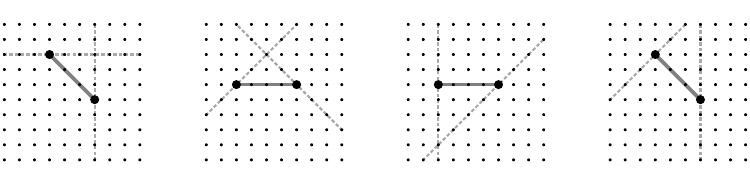}
\caption{The possible attacking configurations in Type $\cU_{4\mathrm{d}}^3$.  From left to right are cases HDV, DHD, VHD, and DDV.}
\label{fig:34d}
\end{figure}
%

\medskip
{\bf Case HDV.}  We assume $d/c=0/1$ and $d''/c''=1/0$.  The argument is the same as in case HDH of Type $\cU_{4\mathrm{c}}^3$.  
The contribution is
$\frac{2}{3}kn^5+\frac{1}{3}kn^3$ when $h=2$.

\medskip
{\bf Case DHD.}  We assume $d/c=1/1$ and $d''/c''=-1/1$.  This case has the same contribution as case DHD of Type $\cU_{4\mathrm{c}}^3$, namely, $\frac{9}{20}hn^5+\frac{5}{12}hn^3+\frac{2}{15}hn$ when $k=2$.

\medskip
{\bf Case VHD.}  We choose $d/c=0/1$ and assume $d''/c''=1/1$.  
We first place $\pP_3$ and $\pP_4$ on the diagonal in $\alpha^{1/1}(n)$ ways, then place $\pP_2$ and $\pP_1$ from $\pP_3$ in $n^2$ ways.  We double for the two orderings of the slopes $0/1$ and $1/0$ and multiply by $h$ for the possible diagonal slopes $d''/c''$.  The contribution here is $\frac{4}{3}kn^5+\frac{2}{3}kn^3$, applicable when $h=2$.

\medskip
{\bf Case DDV.}  We choose $d/c=1/1$ and assume $d''/c''=1/0$.  
Case DD in Type $\cU_{3\mathrm{b}}^2$ counts configurations of $\pP_1$, $\pP_2$, and $\pP_3$ in two attacking moves along diagonals of slopes $+1$ and $-1$.  Then we place $\pP_4$ in relation to $\pP_3$ in $n$ ways.  Accounting for the two different orderings of the slopes $1/1$ and $-1/1$, the contribution when $k=2$ is 
$\big[\frac{5}{6}hn^5+\frac{2}{3}hn^3+\frac{1}{4}hn\big]-(-1)^n\frac{1}{4}hn$.

\medskip
In all cases, $\mu(\hat0,\cU)=-1$.
The total contribution to $o_{\pQ^{hk}}(q;n)$ is 
\begin{align*}
\ \ 
-(q)_4\bigg\{ 
& \Big[2k\delta_{h2}+\frac{77h}{60}\delta_{k2}\Big]n^{2q-3}+\Big[k\delta_{h2}+\frac{13h}{12}\delta_{k2}\Big]n^{2q-5}+\frac{23h}{60}\delta_{k2}n^{2q-7}  
- (-1)^n\frac{h}{4}\delta_{k2}n^{2q-7}\bigg\}.
\end{align*}
%

\medskip
\item[{\bf Type $\cU_{4\mathrm{e}}^3$}\,]   
The subspace $\cU$ is defined by three move equations of different slope, all involving the same piece, say $\cU=\cH^{d/c}_{12} \cap \cH^{d'/c'}_{13} \cap \cH^{d''/c''}_{14}$.  
Given the set of slopes, there are $(q)_4$ ways to choose the pieces.

The number of ways to place four attacking pieces in $\cU$ depends on the slopes.  When $\{1/1,-1/1\}\subset\{d/c,d'/c',d''/c''\}$, then first place $\pP_1$ and the two pieces defined along diagonals as in Case DD from Type $\cU_{3\mathrm{b}}^2$ and subsequently the last piece horizontally or vertically in $n$ ways, giving 
$
hn\left\{\left[\frac{5}{12}n^4+\frac{1}{3}n^2+\frac{1}{8}\right]-(-1)^n\frac{1}{8}\right\}
$
ways for the four pieces (contributing only when $k=2$).  

When $\{0/1,1/0\}\subset\{d/c,d'/c',d''/c''\}$, then place $\pP_1$ and the piece aligned diagonally in $k\alpha^{1/1}$ ways and place the other two pieces in $n^2$ ways, giving $kn^2\left\{ \frac23n^3+\frac13n \right\}$ placements (that contribute only when $h=2$). 

Once more, $\mu(\hat0,\cU)  = -1$.  The total contribution to $o_{\pQ^{hk}}(q;n)$ is
\begin{align*}
\qquad\ 
-(q)_4 \bigg\{ & \Big[\frac{5h}{12}\delta_{k2}+\frac{2k}{3}\delta_{h2}\Big]n^{2q-3} + \Big[\frac{h}{3}\delta_{k2}+\frac{k}{3}\delta_{h2}\Big]n^{2q-5} + \frac{h}{8}\delta_{k2}n^{2q-7}  
 - (-1)^n\frac{h}{8}\delta_{k2}n^{2q-7} \bigg\}.
\end{align*}
%

\medskip
\item[{\bf Type $\cU_{4^*}^3{:}\cU_2^1\cU_2^2$}\,]  
The subspace $\cU$ decomposes into a hyperplane $\cH^{d/c}_{12}$ and a codimension-2 subspace $\cW_{34}^{\,=}$ of type $\cU_2^2$.  We write  $\cW_{34}^{\,=} = \cH^{d'/c'}_{34}\cap \cH^{d''/c''}_{34}$, 
where $d'/c'\ne d''/c''$.  There is no restriction on $d/c$. 

There are $(q)_4/4$ ways to choose the ordered pair of pairs of pieces, $\{\pP_1,\pP_2\}$ and $\{\pP_3,\pP_4\}$.

Since $\pP_4$ is essentially merged with $\pP_3$, the number of attacking configurations is $\sum_{(c,d)\in\M} \alpha^{d/c}(n)=\big(h+\frac{2}{3}k\big)n^3+\frac{1}{3}kn.$ 

The M\"obius function is a product, $\mu(\hat0,\cU)=\mu(\hat0,\cH^{d/c}_{12})\mu(\hat0,\cW_{34}^{\,=}) = 1-|\M|$ (see Section \subspaces).
The total contribution to $o_{\pQ^{hk}}(q;n)$ is 
$$
-(q)_4(h+k-1)\bigg\{\Big[\frac{h}{4}+\frac{k}{6}\Big]n^{2q-3}+\frac{k}{12}n^{2q-5}\bigg\}.
$$ 

\item[{\bf Type $\cU_{5^*\mathrm{a}}^3{:}\cU_2^1\cU_{3\mathrm{a}}^2$}\,]  
The subspace $\cU$ decomposes into a hyperplane and a codimension-2 subspace of type $\cU_{3\mathrm{a}}^2$, say $\cU = \cH^{d/c}_{12} \cap \cW_{345}^{\,d'/c'}$, where $d/c$ may equal $d'/c'$.  
We can choose the pieces in $(q)_5/2!3!$ ways.

The number of attacking configurations is $\sum_{(c,d)\in\M} \alpha^{d/c}(n)=\big(h+\frac{2}{3}k\big)n^3+\frac{1}{3}kn$ times the count from Type $\cU_{3\mathrm{a}}^2$, $\big(h+\frac{1}{2}k\big)n^{4}+\frac{1}{2}kn^{2}$.

As for the M\"obius function, $\mu(\hat0,\cU) = \mu(\hat0,\cH^{d/c}_{12})\mu(\hat0,\cU_{3\mathrm{a}}^2) = -2$.  The contribution to $o_{\pQ^{hk}}(q;n)$ is therefore
\[
-(q)_5\bigg\{ \Big[\frac{h^2}{6}+\frac{7hk}{36}+\frac{k^2}{18}\Big]n^{2q-3}+\Big[\frac{5hk}{36}+\frac{k^2}{12}\Big]n^{2q-5}+\frac{k^2}{36}n^{2q-7}\bigg\}.
\]

\medskip
\item[{\bf Type $\cU_{5^*\mathrm{b}}^3{:}\cU_2^1\cU_{3\mathrm{b}}^2$}\,]  
The subspace $\cU$ decomposes into a hyperplane and a codimension-2 subspace of type $\cU_{3\mathrm{b}}^2$.  We write  $\cU = \cH^{d/c}_{12} \cap \cH^{d'/c'}_{34}\cap \cH^{d''/c''}_{45}$, with $d'/c'\ne d''/c''$ and arbitrary $d/c$.
We can choose the five pieces in $(q)_5/2!$ ways.

The number of attacking configurations is $\sum_{(c,d)\in\M} \alpha^{d/c}(n)$ times the count from Type $\cU_{3\mathrm{b}}^2$, thus
$
\big[ \delta_{h2} + \frac23 hk + \frac{5}{12} \delta_{k2} \big] n^4 + \big[ \frac13 hk + \frac13 \delta_{k2} \big] n^2 + \frac18 \delta_{k2} - (-1)^n \frac18 \delta_{k2} .
$

Here again $\mu(\hat0,\cU) = -1$.  
Consequently, the total contribution to $o_{\pQ^{hk}}(q;n)$ is 
%
\begin{align*}
-\frac12 (q)_5\bigg\{ 
&\Big[ \frac{3h+2k}{3}\delta_{h2} + \frac{2}{3}h^2k + \frac{4}{9}hk^2 + \frac{5(3h+2k)}{36} \delta_{k2} \Big] n^{2q-3} \\
&+\Big[ \frac{1}{3}k\delta_{h2} + \frac{1}{3}h^2k + \frac{4}{9}hk^2 + \Big(\frac{1}{3}h+\frac{13}{36}k\Big)\delta_{k2} \Big] n^{2q-5} \\
&+\Big[ \frac{1}{9}hk^2 + \Big(\frac{1}{8}h+\frac{7}{36}k\Big)\delta_{k2} \Big] n^{2q-7} 
+ \frac{1}{24}k\delta_{k2} n^{2q-9} \\
&- (-1)^n \left( \frac{3h+2k}{24} \delta_{k2} n^{2q-7} + \frac{1}{24}k\delta_{k2} n^{2q-9} \right)
\bigg\}.
\end{align*}
%

\medskip
\item[{\bf Type $\cU_{6^*}^3{:}\cU_2^1\cU_2^1\cU_2^1$}\,]   
The subspace $\cU$ is defined by three move equations involving six distinct pieces.  Thus, $\cU = \cH^{d/c}_{12} \cap \cH^{d'/c'}_{34}\cap \cH^{d''/c''}_{56}$ is decomposable into the three indicated hyperplanes, whose slopes are not necessarily distinct.  The M\"obius function is $\mu(\hat0,\cU) = -1$.  

There are $\binom{q}{2,2,2,q-6} = (q)_6/48$ ways to choose an unordered triple of unordered pairs of pieces.  Then we fix an arbitrary ordering of the three pairs and assign any slope $d/c$ to the first pair, $d'/c'$ to the second, and $d''/c''$ to the third.  
The number of attacking configurations in each case is $\alpha^{d/c}(n) \cdot \alpha^{d'/c'}(n) \cdot \alpha^{d''/c''}(n)$.   
The total contribution of all cases (before multiplication by $n^{2q-12}$) is
\begin{align*}
&-\frac{(q)_6}{48} \sum_{(c,d),(c',d'),(c'',d'')\in\M} \alpha^{d/c}(n)\cdot\alpha^{d'/c'}(n)\cdot\alpha^{d''/c''}(n) \\
&= -\frac{(q)_6}{48} \bigg[ \sum_{(c,d)\in\M} \alpha^{d/c}(n) \bigg]^3 
= -\frac{(q)_6}{48} \bigg[ \frac{3h+2k}{3}n^3 + \frac{k}{3}n \bigg]^3 .
\end{align*}
Thus, the contribution of Type $\cU_{6^*}^3$ to $o_{\pQ^{hk}}(q;n)$, after multiplication by the $n^{2q-12}$ ways to place the remaining pieces, is 
\begin{align*}
\qquad\quad 
&-(q)_6 \left\{ \frac{(3h+2k)^3}{1296} n^{2q-3} + \frac{3k(3h+2k)^2}{1296} n^{2q-5} + \frac{3k^2(3h+2k)}{1296} n^{2q-7} + \frac{k^3}{1296} n^{2q-9}  \right\}.
\end{align*}
\end{description}

Summing the contributions of each type completes the proof of Lemma~\ref{L:codim3}.  (We verified the sum via Mathematica.)
\end{proof}

\sectionpage\section*{Acknowledgement}

We wish to reiterate our gratitude to Vaclav \Kot\ for collecting and computing so many formulas for non-attacking rider configurations.  Besides noticing patterns in the formulas, we relied upon his work to detect and help correct several minor errors that could have spoiled our results.

\newpage\section*{Dictionary of Notation}

\hspace{-.29in}
\begin{minipage}[t]{3.4in}\vspace{0in}
\begin{enumerate}[]
\item $(c,d),(c_r,d_r)$ -- coords of move vector (p.\ \pageref{d:mr})
\item $(\hatc,\hatd)$ -- $(\min,\max)$ of $c,d$ (p.\ \pageref{d:cdhat})
\item $d/c$ -- slope of line or move (p.\ \pageref{d:slope-hyp})
\item $h$ -- \# horiz, vert moves of partial queen (p.\ \pageref{d:partQ})
\item $k$ -- \# diagonal moves of partial queen (p.\ \pageref{d:partQ})
\item $m = (c,d)$, $m_r = (c_r,d_r)$ -- basic move (p.\ \pageref{d:mr})
\item $n$  -- size of square board (p.\ \pageref{d:n})
\item $n+1$ -- dilation factor for board (p.\ \pageref{d:n+1}) 
\item $[n] = \{1,\hdots,n\}$  (p.\ \pageref{d:[n]}) 
\item $[n]^2$ -- square board (p.\ \pageref{d:[n]2}) 
\item $o_\pP(q;n)$ -- \# nonattacking lab configs (p.\ \pageref{d:distattacks})
\item $q$ -- \# pieces on a board (p.\ \pageref{d:q})
\item $r$ -- move index (p.\ \pageref{d:mr})
\item $u_\pP(q;n)$ -- \# nonattacking unlab configs (p.\ \pageref{d:indistattacks})
\item $z=(x,y)$, $z_i=(x_i,y_i)$ -- piece position (p.\ \pageref{d:config})
\end{enumerate}
\medskip
\begin{enumerate}[]
\item $\bz = (z_1,\ldots,z_q)$ -- vector in $\bbR^{2q}$
\item $\bz = (z_1,\ldots,z_q)$ -- configuration (p.\ \pageref{d:config})
\end{enumerate}
\medskip
\begin{enumerate}[]
\item $\alpha(\cU;n)$ -- \# attacking configs in $\cU$ (p.\ \pageref{d:alphaU})
\item $\alpha^{d/c}(n)$ -- \# 2-piece collinear attacks (p.\ \pageref{d:adc})
\item $\beta^{d/c}(n)$ -- \# 3-piece collinear attacks (p.\ \pageref{d:adc})
\item $\gamma_i$ -- coefficient of $u_\pP$ (p.\ \pageref{d:gamma})
\item $\delta_{ij}$ -- Kronecker delta (p.\ \pageref{d:KD})
\item $\epsilon = \frac12 [1-(-1)^n] \equiv n \mod 2$ (p.\ \pageref{d:ep})
\item $\nu$ -- $\codim\cU$ (p.\ \pageref{d:codim})
\item $\mu$ -- M\"obius function of $\cL$ (p.\ \pageref{d:mu})
\item $\kappa$ -- \# of pieces in eqns of $\cU$ (p.\ \pageref{d:kappa})
\end{enumerate}
\end{minipage}
\begin{minipage}[t]{3.5in}\vspace{0in}
\begin{enumerate}[]
\item $\M$ -- set of basic moves (p.\ \pageref{d:moveset})
\end{enumerate}
\bigskip
\begin{enumerate}[]
\item $\cA_{\pP}$ -- move arr of piece $\pP$ (p.\ \pageref{d:AP})
\item $\cH_{ij}^{d/c}$ -- hyperplane for move $(c,d)$ (p.\ \pageref{d:slope-hyp})
\item $\cL$ -- intersection semilattice  (p.\ \pageref{d:L})
\item $\cube$ -- polytope (p.\ \pageref{d:cP})
\item $(\cube,\cA_\pP)$ -- inside-out polytope (p.\ \pageref{d:cP})
\item $\cU$ -- subspace in intersection semilatt (p.\ \pageref{d:U})
\item $\cU^\nu_{\kappa\mathrm{a}}$ -- subsp of codim $\nu$ with $\kappa$ moves (p.\ \pageref{d:codim})

\item $\cW_{i\ldots}^{\,d/c}$ -- subspace of collinearity (p.\ \pageref{d:Wdc})
\item $\cW_{i\ldots}^{\,=}$ -- subspace of equal position (p.\ \pageref{d:W=})
\end{enumerate}
\bigskip
\begin{enumerate}[]
\item $\bbR$ -- real numbers
\item $\bbZ$ -- integers
\end{enumerate}
\bigskip
\begin{enumerate}[]
\item $\pP$ -- piece (p.\ \pageref{d:P})
\item $\pP_i$ -- $i$-th labelled copy of $\pP$ (p.\ \pageref{d:P})
\item $\pQ^{hk}$ -- partial queen (p.\ \pageref{d:partQ})
\end{enumerate}
\vspace{1.9in}
\end{minipage}

\newpage


\newcommand\otopu{$\overset{\circ}{\textrm u}$}

\end{document}